\theoremstyle{theorem}
\newtheorem{theorem}{Theorem}[section]
\newtheorem*{theorem*}{Theorem}
\newtheorem{corollary}[theorem]{Corollary}
\newtheorem*{corollary*}{Corollary}
\newtheorem{lemma}[theorem]{Lemma}
\newtheorem{proposition}[theorem]{Proposition}
\newtheorem*{thmdef*}{Theorem/Definition}
\theoremstyle{definition}
\newtheorem{example}[theorem]{Example}
\newtheorem{definition}[theorem]{Definition}
\newtheorem*{definition*}{Definition}
\newcommand{\C}{\mathbb{C}}
\newcommand{\Z}{\mathbb{Z}}
\newcommand{\cC}{\mathcal{C}}
\newcommand{\sq}{\subseteq}
\newcommand{\ds}{\dots}
\newcommand{\cds}{\cdots}
\newcommand{\F}{\mathbb{F}}
\newcommand{\Q}{\mathbb{Q}}
\newcommand{\N}{\mathbb{N}}
\newcommand{\fa}{\mathfrak{a}}
\DeclareMathOperator{\Hom}{Hom}
\DeclareMathOperator{\End}{End}
\DeclareMathOperator{\Mod}{Mod}
\newcommand{\Mustata}{Musta\c{t}\u{a}}
\setlist[enumerate]{itemsep=2pt, topsep=2pt, itemindent=10pt, label=(\roman*)}
\begin{document}
 	\title{Bernstein-Sato roots for monomial ideals in positive characteristic}
 	\author{Eamon Quinlan-Gallego \footnote{Partially supported by NSF grant DMS-1801697 and by the Ito Foundation for International Education Exchange.}}
 	
 	\maketitle
 	
 	\begin{abstract}
 		Following work of \Mustata \ and Bitoun we recently developed a notion of Bernstein-Sato roots for arbitrary ideals, which is a prime characteristic analogue for the roots of the Bernstein-Sato polynomial. Here we prove that for monomial ideals the roots of the Bernstein-Sato polynomial (over $\C$) agree with the Bernstein-Sato roots of the mod-$p$ reductions of the ideal for $p$ large enough. We regard this as evidence that the characteristic-$p$ notion of Bernstein-Sato root is reasonable.
 	\end{abstract}
 	
\section{Introduction}
Let $R = \C[x_1, \ds, x_n]$ be a polynomial ring over $\C$. We denote by $D_R$ the ring of $\C$-linear differential operators on $R$, i.e. the ring generated by $R$ and its derivations inside of $\End_\C(R)$. Let $f \in R$ be a nonzero polynomial. Bernstein \cite{Ber} and Sato \cite{SatoM} independently, and in different contexts, discovered the following fact: there is a nonzero polynomial $b(s) \in \C[s]$ and a differential operator $P(s) \in D_R[s]$ satisfying the following functional equation:
$$P(s) \cdot f^{s+1} = b(s) f^s.$$
The monic polynomial $b_f(s)$ of least degree for which there is some $P(s) \in D_R[s]$ satisfying the above equation is called the Bernstein-Sato polynomial for $f$. By a theorem of Kashiwara, it is known to have negative rational roots \cite{Kas76}. 

Since its inception the Bernstein-Sato polynomial has seen a wide variety of applications. In \cite{Mal74} Malgrange exhibited a relation between the roots of $b_f(s)$ and the eigenvalues of the monodromy action on the cohomology of the Milnor fibre of $f$. Kashiwara \cite{Kas83} and Malgrange \cite{Mal83} also used the existence of Bernstein-Sato polynomials to define $V$-filtrations with the purpose of defining nearby and vanishing cycles at the level of $D$-modules. Coming full circle, Budur, \Mustata \ and Saito then used this theory of $V$-filtrations to define the Bernstein-Sato polynomial $b_\fa(s)$ of an arbitrary ideal $\fa \sq R$, which still has negative rational roots in this setting.

A key application of Bernstein-Sato polynomials comes from the fact that the log-canonical threshold of $\fa$ (an invariant originally coming from complex analysis, but now with strong applications in birational geometry) is the smallest root of $b_\fa(-s)$. Moreover, any jumping number for the multiplier ideal in the interval $[\alpha, \alpha + 1)$, where $\alpha$ is the log-canonical threshold of $\fa$, is a root of the Bernstein-Sato polynomial \cite{BMSa}.

The test ideals, objects originally coming from the theory of tight closure \cite{HH90} \cite{HY03}, are known to give good characteristic $p$ analogues to multiplier ideals. It is thus reasonable to ask whether one could develop a theory of Bernstein-Sato polynomials in characteristic $p > 0$. This hope is encouraged by the fact that, in \cite{MTW}, the Bernstein-Sato polynomial of an ideal $\fa$ in characteristic zero has also been linked to certain characteristic $p$-invariants of a mod-$p$ reduction of $\fa$.

In \cite{Mustata2009} \Mustata \ was the first to explore this avenue of research for the case of a principal ideal $\fa = (f)$ in a regular $F$-finite ring of characteristic $p>0$. This technique has since then been refined by Bitoun in \cite{Bitoun2018}, and has also been generalized to the settings of unit $F$-modules \cite{Stad12} and $F$-regular Cartier modules \cite{BliStab16}.

In \cite{QG19} the approaches of \Mustata \ and Bitoun were expanded to arbitrary ideals $\fa \sq R$ and, in particular, a notion of Bernstein-Sato root of $\fa$ is defined by generalizing a previous definition of Bitoun. These Bernstein-Sato roots are characteristic-$p$ analogues of the roots of the Bernstein-Sato polynomial (it is a question in \cite{QG19} whether one can find an analogue for the multiplicity of a root).

The Bernstein-Sato roots of $\fa$ are negative, rational (that is, they lie in $\Z_{(p)}$) and encode some information about the $F$-jumping numbers of $\fa$ \cite{QG19}. Furthermore, the definition of Bernstein-Sato root in prime characteristic is compatible with that of the Bernstein-Sato polynomial in characteristic zero \cite[\S 6.1]{QG19}. 

Despite these nice properties about Bernstein-Sato roots in prime characteristic, if the concept is to be reasonable one would expect that if $\fa \sq \Z[x_1, \ds, x_n]$ is a monomial ideal then the Bernstein-Sato roots of the ideal $\fa_p$ in $\F_p[x_1, \ds, x_n]$ given by the image on $\fa$ in $\F_p[x_1, \ds, x_n]$ should recover those of the ideal $\fa_\C \sq \C[x_1, \ds, x_n]$, the expansion of $\fa$ to $\C[x_1, \ds, x_n]$. Indeed, we expect a similar statement for families of ideals in polynomial rings whose behavior does not depend on the characteristic of the base field. 

In this paper our goal is to show that this expectation is indeed true. More precisely, our theorem is as follows. 

\begin{theorem*} [\ref{thm-BSroot-mon-ideal}]
	Let $\fa \sq \Z[x_1, \ds, x_n]$ be a monomial ideal. Then the set of roots of $b_{\fa_\C}(s)$ coincides with the set of Bernstein-Sato roots of $\fa_p$ for $p$ large enough.
\end{theorem*}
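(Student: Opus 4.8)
The plan is to reduce the statement to two separate explicit combinatorial descriptions of the two sides and then compare them. On the characteristic-zero side, there is a classical result (due to \Mustata, with refinements, computing $b_{\fa_\C}(s)$ for monomial $\fa$ in terms of the Newton polyhedron $P$ of $\fa$): the roots of $b_{\fa_\C}(s)$ are the negatives of a finite set of rational numbers determined by the facets of $P$ and the way the vector $(1,\dots,1)$ (or the lattice points of the fundamental region) interact with the supporting hyperplanes of $P$. The first step is therefore to recall and set up this description precisely. The second step is to obtain an analogous description of the Bernstein-Sato roots of $\fa_p$ for $p \gg 0$: the expectation is that they depend only on $P$ (not on $p$) once $p$ is large, and are governed by the same linear-algebraic data coming from the facets of $P$.

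The key technical bridge will be the prime-characteristic machinery of \cite{QG19}. Concretely, I would use the characterization of Bernstein-Sato roots of $\fa_p$ via the $\nu$-invariants / $F$-thresholds and the structure of the ``$D$-module generated by $f^s$'' analogue, but specialized to the monomial situation where everything is $\Z^n$-graded. Monomiality is exactly what makes both sides tractable: the relevant $D$-modules, test ideals, and $F$-jumping numbers all decompose according to the $\Z^n$-grading, and for a monomial ideal the $F$-pure thresholds and higher jumping numbers are, for $p$ large, given by the same formula as the log-canonical threshold and the multiplier-ideal jumping numbers in characteristic zero (this is the monomial case of the \cite{MTW}-type comparison). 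So the plan is: (i) show the set of Bernstein-Sato roots of $\fa_p$ is contained in a fixed finite set $S(P) \subseteq \Z_{(p)}$ of candidate rationals coming from the facets of $P$, uniformly for $p \gg 0$; (ii) show each element of $S(P)$ that is actually a root of $b_{\fa_\C}(s)$ is achieved as a Bernstein-Sato root of $\fa_p$ for $p \gg 0$, by exhibiting the relevant functional equation / nonvanishing mod $p$; and (iii) conversely rule out spurious roots on either side using the explicit formulas.

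In more detail, I would work facet by facet. For a facet of $P$ with primitive inner normal $a = (a_1,\dots,a_n)$ and lattice distance $\ell$, the contribution to the characteristic-zero $b$-function is a set of roots of the form $-(a_1 + \cdots + a_n + k)/\ell$ for $k$ ranging over a bounded set of nonnegative integers (cut out by which lattice points lie in the appropriate shifted cone). The claim to prove is that for $p \gg 0$ the Bernstein-Sato root computation of $\fa_p$, which a priori involves $p$-adic limits $\lim_e \nu_{\fa_p}(p^e)/p^e$ and their refinements, stabilizes to exactly these same rationals: since all the relevant polyhedral data is fixed and $p$ only enters through reduction mod $p$ of integer linear inequalities, for $p$ avoiding the finitely many ``bad'' primes (those dividing one of the finitely many relevant determinants or coefficients) the mod-$p$ picture is identical to the rational picture. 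This is where I expect the main obstacle to lie: making precise the claim that the prime-characteristic Bernstein-Sato root invariant of a monomial ideal is computed by the ``obvious'' monomial formula, i.e. proving a clean monomial formula for Bernstein-Sato roots in characteristic $p$ analogous to \Mustata's in characteristic zero. Once that formula is in hand, the comparison for $p \gg 0$ is essentially bookkeeping about which primes divide the finitely many integers appearing in the two formulas; the genuinely new content is establishing the characteristic-$p$ monomial formula, presumably by a direct analysis of the $\Z^n$-graded pieces of the $F$-module $\fa_p^{s}$-analogue and an induction on the dimension of faces of $P$.
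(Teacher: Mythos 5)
Your proposal takes a genuinely different route from the paper, and while it is plausible in outline, it leaves the hardest step as an unproven project. The paper does \emph{not} derive an explicit ``Newton polyhedron'' formula for the characteristic-$p$ Bernstein-Sato roots. Instead, it leverages the characterization (Proposition \ref{prop-new-charact}) of Bernstein-Sato roots of $\fa_p$ as $p$-adic limits of sequences of $\nu$-invariants $\nu_e \in \nu^\bullet_{\fa_p}(p^e)$, together with the already-known results of \cite{BMSa06b} that $\nu^J_\fa(q)$ is eventually of the form $\beta q + \eta$ along arithmetic progressions $q \equiv 1 \bmod M$, with the constant term $\eta$ a root of $b_{\fa_\C}(s)$. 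One direction then becomes immediate: a root $\alpha$ of $b_{\fa_\C}(s)$ yields a $\nu$-invariant sequence $\beta p^{ed} + \alpha$ whose $p$-adic limit is $\alpha$. The other direction is where the paper's genuinely new work lies (Proposition \ref{prop-root-to-J-char-p}): given a Bernstein-Sato root $\alpha$ of $\fa_p$, one must produce a \emph{single} monomial ideal $J$ with $\nu^J_{\fa_p}(p^{e_i d}) = \beta p^{e_i d} + \alpha$ along a subsequence. This is achieved via the reformulation of $\nu$-invariants through Cartier operators (Proposition \ref{nu-invt-trun-ti-prop}), a uniform degree bound on the ideals $\cC^e_R \cdot \fa^m$ (Lemma \ref{testideal-degree-lemma}), and a pigeonhole argument to extract a common monomial $\mu$, which then dictates $J$ via Lemma \ref{mu-J-lemma}. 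Once $J$ is fixed, the linearity from \cite{BMSa06b} forces $\alpha$ to equal the constant term $\eta$, hence a root of $b_{\fa_\C}(s)$.

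Two concrete issues with your plan as written. First, the characteristic-$p$ monomial formula you identify as ``the genuinely new content'' is the entire difficulty, and you give no indication of how to prove it; the paper's route avoids needing it altogether, which is precisely the efficiency gained by working through $\nu$-invariants. Second, be careful about what kind of limit computes Bernstein-Sato roots: you write ``$p$-adic limits $\lim_e \nu_{\fa_p}(p^e)/p^e$,'' but the real limit $\lim_e \nu/p^e$ is the $F$-threshold, not the Bernstein-Sato root. Bernstein-Sato roots in characteristic $p$ are $p$-adic limits of the integers $\nu_e$ themselves (Proposition \ref{prop-new-charact}), and the mechanism by which a linear formula $\nu^J_\fa(p^{ed}) = \beta p^{ed} + \alpha$ yields the root is that $\beta p^{ed} \to 0$ $p$-adically, leaving the constant term. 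Conflating these two limits would break the comparison you propose to make.
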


Our proof relies heavily on results from \cite{BMSa06b}. In Section \ref{scn-background} we review the notion of Bernstein-Sato root as defined in \cite{QG19} as well as the needed theorems from \cite{BMSa06b}. We then prove our result in Section \ref{scn-main}. We finish with two examples in Section 4 that illustrate the behavior in small characteristics.

Let us fix the notation already used above: if $I \sq \Z[x_1, \ds, x_n]$ is an ideal we denote by $I_p$ the image of $I$ in $\F_p[x_1, \ds, x_n]$ and by $I_\C$ the expansion of $I$ to $\C[x_1, \ds, x_n]$. A ring $R$ of prime characteristic $p>0$ is $F$-finite if it is finite as a module over its subring $R^p$ of $p$-th powers.

\subsection*{Acknowledgements}

I would like to thank Karen Smith and Shunsuke Takagi for their encouragement and their guidance. I am especially grateful to Shunsuke Takagi for suggesting this problem to me. I would also like to thank the referee for their careful reading and suggestions. 

\section{Background} \label{scn-background}

Until stated otherwise we work with the following setup: $R$ is a regular ring of characteristic $p>0$ which is $F$-finite.

\subsection{Cartier operators}

We denote by $F: R \to R$ the Frobenius endomorphism on $R$ and, given an integer $e > 0$, we write $F^e$ for its $e$-th iterate. We define $F^e_*: \Mod(R) \to \Mod(R)$ to be the functor that restricts scalars via $F^e$. The $R$-module $F^e_* R$ is then equal to $R$ as an abelian group and we will denote an element $r \in R$ as $F^e_* r$ when viewed as an element of $F^e_* R$. In this way, the $R$-module action on $F^e_* R$ is given by $s \cdot F^e_* r = F^e_* (s^{p^e} r)$ for all $s, r \in R$. 

Given an ideal $I \sq R$ and and an integer $e > 0$, the ideal $I^{[p^e]}$ is defined to be the ideal generated by $p^e$-th powers of elements of $I$; that is, $I^{[p^e]} :=(f^{p^e}: f \in I)$. 

Given an integer $e> 0$ we let $\cC^e_R := \Hom_{R}(F^e_*R, R)$. An operator $\phi \in \cC^e_R$ acts on $R$ via $\phi \cdot r := \phi(F^e_* r)$ for all $r \in R$. In this way, given an ideal $I \sq R$ the new ideal $\cC^e_R \cdot I$ is generated by the set $\{\phi(F^e_*r ) : \phi \in \cC^e_R, r \in I\}$. When $R$ is a polynomial ring over $k$ and $I$ is principal the ideal $\cC^e_R \cdot I$ also admits the following description.
\begin{proposition} [{\cite[Prop. 2.5]{BMSm2008}}] \label{prop-Cef-repn}
	Let $R := k[x_1, \ds, x_n]$ be a polynomial ring over $k$, fix $e > 0$ and consider the set of multi-exponents $L := \{0, 1, \ds, p^e-1\}^n$. If $f$ is expressed in the $R^{p^e}$-basis $\{x^\gamma : \gamma \in L\}$ as $f = \sum_{\gamma \in L} g_\gamma^{p^e} x^\gamma$ then $\cC^e_R \cdot f = (g_\gamma : \gamma \in L)$. 
\end{proposition}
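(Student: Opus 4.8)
The plan is to unwind the definitions of the module action on $\cC^e_R$ and of the ideal $\cC^e_R\cdot f$ in terms of the $R^{p^e}$-basis $\{x^\gamma : \gamma \in L\}$ of $R$. First I would record the key structural fact: since $R = k[x_1,\ds,x_n]$ is free over $R^{p^e} = k^{p^e}[x_1^{p^e},\ds,x_n^{p^e}]$ with basis $\{x^\gamma : \gamma \in L\}$ (here one uses that $k$ is perfect, or at least works after noting $F^e_* R$ is free over $R$ with this basis — in the application $k = \F_p$ so this is immediate), the module $\cC^e_R = \Hom_R(F^e_* R, R)$ is free of rank $p^{en}$ over $R$, with dual basis $\{\Phi_\gamma : \gamma \in L\}$ determined by $\Phi_\gamma(F^e_* x^\delta) = \delta_{\gamma\delta}$ for $\gamma,\delta \in L$. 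Concretely, $\Phi_\gamma(F^e_* h) = c_\gamma^{1/p^e}$ where $h = \sum_\delta c_\delta^{1/p^e} \cdot$ wait — more carefully: if $h = \sum_{\delta\in L} h_\delta^{p^e} x^\delta$ with $h_\delta \in R$, then $\Phi_\gamma(F^e_* h) = h_\gamma$, using that $R$ being reduced makes $p^e$-th roots unique when they exist.

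Next I would compute $\cC^e_R \cdot f$. By definition this ideal is generated by $\{\phi(F^e_* (rf)) : \phi \in \cC^e_R,\ r \in R\}$ — note one must be slightly careful: $\cC^e_R \cdot (f)$ as an ideal is generated by $\phi$ applied to $F^e_*$ of \emph{all} elements of the principal ideal $(f)$, not just $f$ itself; but since $\phi(F^e_*(rf)) = \phi(r^{p^e} \cdot F^e_* f \cdot$ -- rather, $\phi(F^e_*(r^{p^e} f')) $ is not quite it either. The clean statement: $\cC^e_R \cdot (f)$ is the smallest ideal $J$ such that $F^e_* f \in$ hmm. Let me instead use: $\cC^e_R \cdot (f) = \sum_{\phi} \phi(F^e_* (f) )$ where $(f)$ is the principal ideal; and since every $\phi$ is an $R$-linear combination of the $\Phi_\gamma$ and $F^e_* R$ is generated over $R$ by $\{F^e_* x^\delta\}$, one reduces to showing $\cC^e_R \cdot (f)$ is generated by the elements $\Phi_\gamma(F^e_*(x^\delta f))$ for $\gamma,\delta\in L$. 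I would then show each such element lies in $(g_\gamma : \gamma \in L)$ and conversely that each $g_\gamma$ arises this way (indeed $g_\gamma = \Phi_\gamma(F^e_* f)$ directly from the given expansion $f = \sum g_\gamma^{p^e} x^\gamma$).

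For the remaining inclusion I would expand $x^\delta f = \sum_{\gamma} g_\gamma^{p^e} x^{\gamma+\delta}$ and rewrite each monomial $x^{\gamma+\delta}$ in the basis: $\gamma + \delta = p^e q(\gamma,\delta) + r(\gamma,\delta)$ componentwise with $r(\gamma,\delta) \in L$, so $x^{\gamma+\delta} = (x^{q(\gamma,\delta)})^{p^e} x^{r(\gamma,\delta)}$ and hence $x^\delta f = \sum_\gamma (g_\gamma x^{q(\gamma,\delta)})^{p^e} x^{r(\gamma,\delta)}$. Collecting terms with the same residue $r \in L$ and applying $\Phi_r$ yields an $R$-linear combination of the $g_\gamma$, proving $\Phi_r(F^e_*(x^\delta f)) \in (g_\gamma : \gamma\in L)$. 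This gives both inclusions and finishes the proof.

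The main obstacle — really the only subtle point — is bookkeeping the change of basis cleanly: making sure the freeness of $R$ over $R^{p^e}$ is correctly invoked (so that the coefficients $g_\gamma$ are well-defined and $p^e$-th roots behave), and carefully justifying the reduction of the generating set of the ideal $\cC^e_R \cdot (f)$ from "all $\phi$, all elements of $(f)$" down to "the finitely many $\Phi_\gamma$ applied to $F^e_*(x^\delta f)$". Everything else is a direct computation with multi-exponents.
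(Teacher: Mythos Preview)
The paper does not prove this proposition; it is quoted from \cite[Prop.~2.5]{BMSm2008} and used as a black box. So there is no proof in the paper to compare yours against, and the question is only whether your argument is correct. It is.

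Your plan---take the dual basis $\{\Phi_\gamma\}$ of $\cC^e_R$ determined by $\Phi_\gamma(F^e_* x^\delta) = \delta_{\gamma\delta}$, observe $\Phi_\gamma(F^e_* f) = g_\gamma$ for one inclusion, and expand $x^\delta f$ in the basis for the other---is exactly the standard argument, and the multi-exponent bookkeeping you outline is correct. Two remarks that would streamline it. First, your worry about ``all elements of $(f)$'' versus ``just $f$'' dissolves once you note that for any $r\in R$ and $\phi\in\cC^e_R$ the map $F^e_* a \mapsto \phi(F^e_*(ra))$ is again $R$-linear, hence lies in $\cC^e_R$; thus $\cC^e_R\cdot(f)$ is already equal to the set $\{\phi(F^e_* f):\phi\in\cC^e_R\}$, which is an ideal since $\cC^e_R$ is a left $R$-module. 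This makes the second inclusion immediate without ever introducing the auxiliary $x^\delta$. Second, your hesitation about perfectness of $k$ is well placed: the statement as written presupposes that $\{x^\gamma:\gamma\in L\}$ is an $R^{p^e}$-basis of $R$, which forces $k=k^{p^e}$. In the paper's applications $k=\F_p$, so this is harmless, but it is worth saying explicitly.
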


\subsection{The $\nu$-invariants} \label{subsn-nu-invt}
Let $\fa \sq R$ be an ideal. The invariants $\nu^J_\fa(p^e)$ were introduced in \cite{MTW}. We recall the definition.
\begin{definition} \label{def-nu-invt}
	Given a proper ideal $J \sq R$ containing $\fa$ in its radical and an integer $e > 0$ we define $\nu^J_\fa(p^e) := \max\{n \geq 0 : \fa^n \not\sq J^{[p^e]}\}.$ The set $\nu^\bullet_\fa(p^e) := \{ \nu^J_\fa(p^e) \ \big| \ (1) \neq \sqrt{J} \supseteq \fa \}$ is called the set of $\nu$-invariants of level $e$ for $\fa$. 
\end{definition}
It is clear from the definition that $\nu_\fa^{J^[p]}(p^e) = \nu^J_\fa(p^{e+1})$, and therefore the $\nu$-invariants come in a descending chain
$$\nu^\bullet_\fa(p^0) \supseteq \nu^\bullet_\fa(p^1) \supseteq \nu^\bullet_\fa(p^2) \supseteq \cds .$$

We will need the following results about $\nu$-invariants, which are well-known to experts.
\begin{proposition}[{\cite[Prop. 4.2]{QG19}}] \label{nu-invt-trun-ti-prop}
	Fix an integer $e > 0$. The set of $\nu$-invariants of level $e$ for $\fa$ is given by
	$$\nu^\bullet_\fa(p^e) = \bigg\{n \geq 0 \ \big| \ \cC_R^e \cdot \fa^n \neq \cC_R^e \cdot \fa^{n+1} \bigg\}.$$
\end{proposition}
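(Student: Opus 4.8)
The plan is to show both inclusions between the two sets by relating them through the chains of ideals $\cC_R^e \cdot \fa^n$. First I would observe that the set $\nu^\bullet_\fa(p^e) = \{\nu^J_\fa(p^e) \mid (1) \neq \sqrt{J} \supseteq \fa\}$ can be reindexed: for a fixed $e$, as $J$ ranges over proper ideals with $\fa \subseteq \sqrt{J}$, the quantity $\nu^J_\fa(p^e) = \max\{n \geq 0 : \fa^n \not\subseteq J^{[p^e]}\}$ records the largest $n$ such that $\fa^n$ escapes the Frobenius power of $J$. The key translation is the standard fact (essentially \cite{MTW}) that $\fa^n \subseteq J^{[p^e]}$ if and only if $\cC_R^e \cdot \fa^n \subseteq J$; this comes from the adjunction between $F^e_*(-)$ and $\Hom_R(F^e_* R, -)$, together with the fact that over a regular $F$-finite ring $F^e_* R$ is locally free, so that $\cC_R^e \cdot I$ is the smallest ideal $K$ with $I \subseteq K^{[p^e]}$. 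I would state and use this as the main lemma feeding the proof.

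Granting that translation, the forward inclusion $\nu^\bullet_\fa(p^e) \subseteq \{n : \cC_R^e \cdot \fa^n \neq \cC_R^e \cdot \fa^{n+1}\}$ goes as follows: suppose $n = \nu^J_\fa(p^e)$ for some admissible $J$. Then $\fa^n \not\subseteq J^{[p^e]}$ but $\fa^{n+1} \subseteq J^{[p^e]}$, so by the lemma $\cC_R^e \cdot \fa^n \not\subseteq J$ while $\cC_R^e \cdot \fa^{n+1} \subseteq J$; since $\cC_R^e \cdot \fa^{n+1} \subseteq \cC_R^e \cdot \fa^n$ always, these cannot be equal. For the reverse inclusion, suppose $\cC_R^e \cdot \fa^n \neq \cC_R^e \cdot \fa^{n+1}$, so the containment $\cC_R^e \cdot \fa^{n+1} \subsetneq \cC_R^e \cdot \fa^n$ is strict. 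Set $K := \cC_R^e \cdot \fa^{n+1}$; I need to check $K$ is admissible, i.e. $K \neq (1)$ and $\fa \subseteq \sqrt{K}$. Properness follows because $\cC_R^e \cdot \fa^n \neq (1)$ would force everything below it proper — more carefully, one checks $\cC_R^e \cdot \fa^m = (1)$ is impossible for the relevant range, or one passes to a prime containing $K$; and $\fa \subseteq \sqrt{K}$ holds because $\fa^{N} \subseteq (\cC_R^e \cdot \fa^{N})^{[p^e]} \subseteq (\sqrt{K})^{[p^e]}$ for suitable $N$, using monotonicity of $\cC_R^e \cdot \fa^{(-)}$ and $I \subseteq (\cC_R^e \cdot I)^{[p^e]}$. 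Then by the lemma $\fa^{n+1} \subseteq K^{[p^e]}$ but $\fa^n \not\subseteq K^{[p^e]}$ (the latter since $\cC_R^e \cdot \fa^n \not\subseteq K$), so $\nu^K_\fa(p^e) = n$, placing $n$ in $\nu^\bullet_\fa(p^e)$.

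The step I expect to be the main obstacle is verifying that the witness ideal $K = \cC_R^e \cdot \fa^{n+1}$ is genuinely admissible — in particular that $\fa \subseteq \sqrt{K}$ and $K$ is proper — since the definition of $\nu^\bullet_\fa(p^e)$ quantifies only over such $J$, and one must produce one from the purely homological data $\cC_R^e \cdot \fa^n \neq \cC_R^e \cdot \fa^{n+1}$. The radical condition is the delicate part: it requires knowing that $\cC_R^e$ applied to a high power of $\fa$ still "sees" $\fa$, which follows from iterating $\cC_R^e \cdot (\cC_R^e \cdot I) \supseteq$ an appropriate power relationship, or more cleanly from $\fa^{p^e m} \subseteq (\cC_R^e \cdot \fa^m)^{[p^e]}$ combined with $\cC_R^e \cdot \fa^{n+1} \supseteq \cC_R^e \cdot \fa^{m}$ having the same radical for $m$ large (the chain of ideals $\cC_R^e \cdot \fa^m$ stabilizes up to radical). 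One should also handle the edge case where $\cC_R^e \cdot \fa^n = (1)$: then automatically $\cC_R^e \cdot \fa^0 = \cC_R^e \cdot R = (1)$ too and one tracks where the first strict drop below $(1)$ occurs. These are routine once the adjunction lemma is in place, but they are exactly where care is needed, and I would write them out carefully rather than leaving them to the reader.
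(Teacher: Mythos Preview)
The paper does not actually give its own proof of this proposition; it is quoted from \cite[Prop.~4.2]{QG19} without argument, so there is nothing in the present paper to compare your attempt against. That said, your approach is the standard one and is correct in outline: the equivalence $\fa^n \subseteq J^{[p^e]} \Longleftrightarrow \cC_R^e \cdot \fa^n \subseteq J$ (valid because $\cC_R^e \cdot I$ is the smallest ideal whose $p^e$-th Frobenius power contains $I$, a consequence of $F^e_* R$ being locally free over regular $F$-finite $R$) immediately yields both inclusions, with the witness ideal $K = \cC_R^e \cdot \fa^{n+1}$ for the reverse direction.

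Where you go astray is in flagging the admissibility of $K$ as ``the main obstacle'' and then sketching an unnecessarily elaborate argument involving stabilization of radicals and edge cases. Both checks are in fact one-liners. Properness: if $K = (1)$ then $\cC_R^e \cdot \fa^n \supseteq K = (1)$ forces $\cC_R^e \cdot \fa^n = (1) = K$, contradicting the hypothesis. Radical containment: since $I \subseteq (\cC_R^e \cdot I)^{[p^e]}$ for every ideal $I$, taking $I = \fa^{n+1}$ gives $\fa^{n+1} \subseteq K^{[p^e]} \subseteq K$, whence $\fa \subseteq \sqrt{K}$. There is no need to invoke large $N$, monotonicity of the chain, or any stabilization-up-to-radical argument, and the ``edge case'' $\cC_R^e \cdot \fa^n = (1)$ causes no trouble whatsoever (it only makes the properness of $K$ more obvious). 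Your write-up would be cleaner and more convincing if you replaced that paragraph with these two direct observations.
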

\begin{corollary}[{\cite[Cor. 4.3]{QG19}}] \label{nu-invt-dynamics-cor}
	If $n \geq r p^e$ is a $\nu$-invariant of level $e$ then so is $n - p^e$. 
\end{corollary}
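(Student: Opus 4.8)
The plan is to deduce this statement directly from Proposition \ref{nu-invt-trun-ti-prop}, which identifies the $\nu$-invariants of level $e$ with the integers $n$ at which the chain $\cC^e_R \cdot \fa^n$ strictly decreases. So I want to show: if $\cC^e_R \cdot \fa^n \neq \cC^e_R \cdot \fa^{n+1}$ and $n \geq r p^e$ (where $r$ should be the number of generators of $\fa$, judging from the context and the role this bound plays in \cite{BMSm2008} and \cite{QG19}), then $\cC^e_R \cdot \fa^{n - p^e} \neq \cC^e_R \cdot \fa^{n - p^e + 1}$. The natural mechanism is a ``Skoda-type'' statement relating $\cC^e_R \cdot \fa^{m}$ and $\cC^e_R \cdot \fa^{m + p^e}$: namely that $\cC^e_R \cdot \fa^{m + p^e} = \fa \cdot (\cC^e_R \cdot \fa^{m})$ once $m \geq (r-1)p^e$, or more precisely once $m + p^e \geq r p^e$. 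Indeed, $\fa^{m+p^e}$ is spanned by products $g^{p^e} h$ with $g \in \fa$ and $h \in \fa^{m}$, provided every monomial of degree $m + p^e$ in the generators can be split off a $p^e$-th power of a generator — which is exactly guaranteed by the pigeonhole bound $m + p^e \geq r p^e$ when $\fa$ has $r$ generators. Applying $\cC^e_R$, which is $R$-linear, and using that $\phi(F^e_*(g^{p^e} h)) = g \, \phi(F^e_* h)$, gives $\cC^e_R \cdot \fa^{m+p^e} = \fa \cdot (\cC^e_R \cdot \fa^{m})$.

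Granting this Skoda relation, the corollary falls out: write $m = n - p^e$, so $m \geq (r-1)p^e$ and hence also $m + 1 \ge (r-1)p^e$ when $p^e \ge 1$ (one should double-check the boundary case, but the same product-splitting argument works for $\fa^{m + 1 + p^e}$ too). Then
\[
\cC^e_R \cdot \fa^{n} = \fa \cdot (\cC^e_R \cdot \fa^{m}), \qquad \cC^e_R \cdot \fa^{n+1} = \fa \cdot (\cC^e_R \cdot \fa^{m+1}).
\]
If we had $\cC^e_R \cdot \fa^{m} = \cC^e_R \cdot \fa^{m+1}$, then multiplying by $\fa$ would give $\cC^e_R \cdot \fa^{n} = \cC^e_R \cdot \fa^{n+1}$, contradicting that $n$ is a $\nu$-invariant of level $e$. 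Hence $\cC^e_R \cdot \fa^{m} \neq \cC^e_R \cdot \fa^{m+1}$, i.e. $m = n - p^e$ is a $\nu$-invariant of level $e$ by Proposition \ref{nu-invt-trun-ti-prop}.

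The main obstacle is pinning down the Skoda-type identity with the correct numerical hypothesis and making sure the generator count $r$ is the one intended in the statement: one has to verify that the containment $\fa^{m+p^e} \subseteq \fa^{[p^e]} \cdot \fa^{m - (r-1)p^e} \cdot (\text{stuff})$ — or whatever precise form is needed — genuinely holds, which is the standard ``every monomial of large enough degree in $r$ variables is divisible by a $p^e$-th power of one of them'' pigeonhole argument, and then to check that applying $\cC^e_R$ interacts with this the right way via the projection-formula identity $\cC^e_R \cdot (I^{[p^e]} J) = I \cdot (\cC^e_R \cdot J)$. None of this is deep, but the bookkeeping around the bound $n \ge r p^e$ versus $n \ge (r-1)p^e$ and the off-by-one in passing from $\fa^n$ to $\fa^{n+1}$ is where care is required; alternatively, one could cite Proposition \ref{prop-Cef-repn} together with the explicit combinatorics in \cite{BMSm2008} to shortcut the argument in the polynomial-ring case, which is all that is ultimately needed.
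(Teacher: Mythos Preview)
The paper does not actually prove this corollary here; it is merely quoted from \cite[Cor.~4.3]{QG19}. So there is no in-paper proof to compare against. That said, your argument is correct and is exactly the standard Skoda-type argument one expects (and which underlies the cited result): for $N \geq r p^e$ the pigeonhole principle gives $\fa^N = \fa^{[p^e]} \cdot \fa^{N - p^e}$, and the projection formula $\cC^e_R \cdot (I^{[p^e]} J) = I \cdot (\cC^e_R \cdot J)$ then yields $\cC^e_R \cdot \fa^N = \fa \cdot (\cC^e_R \cdot \fa^{N - p^e})$; combining with Proposition~\ref{nu-invt-trun-ti-prop} finishes as you wrote.

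Your hedging about the boundary case is unnecessary: the pigeonhole step only needs $N > r(p^e - 1)$, and since $n \geq r p^e$ both $N = n$ and $N = n+1$ satisfy this with room to spare. The identification of $r$ as the number of chosen generators of $\fa$ is correct (cf.\ the setup in \S\ref{subsn-nu-invt} and the later use of $r$ in the proof of Proposition~\ref{prop-root-to-J-char-p}).
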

We next state following fact from \cite{MTW}, which connects the Bernstein-Sato polynomial with these characteristic $p$ invariants of singularities. 
\begin{proposition} [{\cite[Prop. 3.11]{MTW}}] Let $\fa \sq (x_1, \ds, x_n) \Z[x_1, \ds, x_n]$ be an ideal. Then for every $p \gg 0$ and every ideal $J \sq (x_1, \ds, x_n) \Z[x_1, \ds, x_n]$ containing $\fa$ in its radical we have
	$$b_{\fa_\C} (\nu^{J_p}_{\fa_p}(p^e)) \equiv 0 \ \mod p$$
	for all $e > 0$.
\end{proposition}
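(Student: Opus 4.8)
The plan is to reduce the characteristic-zero Bernstein-Sato functional equation of $\fa$ modulo $p$, specialize its exponents to the integer $n := \nu^{J_p}_{\fa_p}(p^e)$, and then force $b_{\fa_\C}(n) \equiv 0 \bmod p$ directly out of the definition of the $\nu$-invariant. Throughout, write $R_\C = \C[x_1, \ds, x_n]$ and $R_p = \F_p[x_1, \ds, x_n]$, fix generators $\fa = (f_1, \ds, f_r)$ with $f_i \in \Z[x_1, \ds, x_n]$, and set $g = f_1 \cds f_r$.

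I would begin by recalling from \cite{BMSa} that $b_{\fa_\C}(s)$ is, by definition, the monic polynomial of least degree admitting a functional equation
$$b_{\fa_\C}(s_1 + \cds + s_r) \cdot \prod_i f_i^{s_i} \ = \ \sum_{i=1}^{r} P_i(s_1, \ds, s_r) \cdot f_i \prod_j f_j^{s_j}$$
in $R_\C[g^{-1}][s_1, \ds, s_r] \cdot \prod_i f_i^{s_i}$, with $P_i \in D_{R_\C}[s_1, \ds, s_r]$. The first real step is a spreading-out argument. For fixed bounds on $\deg b$, on the orders of the $P_i$, and on the $x$-degrees of their coefficients in the Hasse-derivative basis of the ring of differential operators, the condition ``$(b, P_1, \ds, P_r)$ satisfies the equation'' is a linear system with coefficients in $\Q$; since it has a solution over $\C$ with $b = b_{\fa_\C}$, it has one over $\Q$ --- so $b_{\fa_\C} \in \Q[s]$ --- and, after clearing denominators, one over $\Z[1/N]$ for a suitable $N$, with operators $P_i$ of order at most some $d$ independent of $p$. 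For $p \gg 0$ (so $p \nmid N$ and $p > d$) we reduce modulo $p$: the same equation holds over $R_p$, with each $P_i$ now a differential operator on $R_p$ of order $< p$.

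Next, fix $e \geq 1$ and set $n := \nu^{J_p}_{\fa_p}(p^e)$, so that $\fa_p^n \not\sq J_p^{[p^e]}$ while $\fa_p^{n+1} \sq J_p^{[p^e]}$. For each $a = (a_1, \ds, a_r) \in \N^r$ with $a_1 + \cds + a_r = n$, specialize $s_i \mapsto a_i$: the symbol $\prod_i f_i^{s_i}$ becomes $f^a := \prod_i f_i^{a_i} \in \fa_p^n$, and since every term of the resulting identity already lies in $R_p \sq R_p[g^{-1}]$, we obtain in $R_p$
$$\big( b_{\fa_\C}(n) \bmod p \big) \cdot f^a \ = \ \sum_{i=1}^{r} \big( P_i(a) \bmod p \big) \cdot \big( f_i f^a \big), \qquad f_i f^a \in \fa_p^{n+1}.$$
The $f^a$ with $a_1 + \cds + a_r = n$ generate $\fa_p^n$ and every $P_i(a) \bmod p$ has order $< p$, so this says $\big( b_{\fa_\C}(n) \bmod p \big) \cdot \fa_p^n \sq D^{<p} \cdot \fa_p^{n+1}$, where $D^{<p}$ denotes the differential operators on $R_p$ of order $< p$. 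The decisive elementary point comes now: an operator of order $< p$ is linear over the subring of $p$-th powers of $R_p$, hence preserves every ideal generated by $p$-th powers; since $e \geq 1$, the ideal $J_p^{[p^e]}$ is of that kind, so $D^{<p} \cdot J_p^{[p^e]} = J_p^{[p^e]}$. Combining with $\fa_p^{n+1} \sq J_p^{[p^e]}$ yields $\big( b_{\fa_\C}(n) \bmod p \big) \cdot \fa_p^n \sq J_p^{[p^e]}$; were $b_{\fa_\C}(n)$ a unit modulo $p$, this would force $\fa_p^n \sq J_p^{[p^e]}$, contradicting the choice of $n$. Hence $b_{\fa_\C}(n) \equiv 0 \bmod p$, as claimed.

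The step I expect to be genuinely delicate is the spreading-out: one must realize the characteristic-zero functional equation over $\Z[1/N]$ \emph{with the orders of the operators bounded independently of $p$}, so that after reducing modulo a large prime they automatically have order $< p$ and are therefore linear over the $p$-th powers. That same boundedness is exactly why the hypothesis $e > 0$ cannot be dropped: for $e = 0$ an operator of order $< p$ need not preserve $J_p$ --- already with $\fa = (x)$ and $J = (x^2)$ one has $\nu^{J}_{\fa}(1) = 1$ while $b_{\fa_\C}(1) = 2 \ne 0$ --- so the argument, and the statement itself, break down at level $0$.
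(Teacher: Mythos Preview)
The paper does not give a proof of this proposition; it is quoted from \cite{MTW} as a black box and used only through its corollary (Corollary~\ref{cor-find-roots}). Your argument is correct and is essentially the original proof in \cite{MTW}: spread out the Budur--\Mustata--Saito functional equation to $\Z[1/N]$ with operators of bounded order, reduce modulo a large prime so the operators become $R_p^p$-linear, specialize the exponents to a tuple summing to $n = \nu^{J_p}_{\fa_p}(p^e)$, and use that $R_p^p$-linear operators preserve $J_p^{[p^e]}$ (for $e \geq 1$) together with $\fa_p^{n+1} \sq J_p^{[p^e]}$ and $\fa_p^n \not\sq J_p^{[p^e]}$ to force $b_{\fa_\C}(n) \equiv 0 \bmod p$. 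Your closing remark about why $e = 0$ fails is also on point.
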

This has the following interesting corollary, which suggests a way of trying to find roots of $b_{\fa_\C}(s)$.
\begin{corollary}[{\cite[Rmk. 3.13]{MTW}}] \label{cor-find-roots}
	Suppose that for some ideal $J \sq (x_1, \ds , x_n) \Z[x_1, \ds, x_n]$ there exists some integer $M$, and a polynomial $P(t) \in \Q[t]$ such that $\nu^{J_p}_{\fa_p}(p^e) = P(p^e)$ whenever $p^e \equiv 1 \ \mod M$. Then $P(0)$ is a root of $b_{\fa_\C}(s)$.
\end{corollary}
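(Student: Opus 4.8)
The plan is to deduce Corollary \ref{cor-find-roots} from the preceding Proposition \cite[Prop. 3.11]{MTW}, which tells us that $b_{\fa_\C}(\nu^{J_p}_{\fa_p}(p^e)) \equiv 0 \mod p$ for all $e > 0$ and all $p \gg 0$. The hypothesis gives us a single polynomial $P(t) \in \Q[t]$ such that $\nu^{J_p}_{\fa_p}(p^e) = P(p^e)$ whenever $p^e \equiv 1 \mod M$. The strategy is to feed these values back into the congruence, produce infinitely many residues $r$ modulo which $b_{\fa_\C}(P(0))$ vanishes after suitable rescaling, and then conclude that $b_{\fa_\C}(P(0)) = 0$ because a fixed nonzero rational number cannot be divisible by infinitely many primes.

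First I would clear denominators: choose an integer $N > 0$ so that $Q(t) := N \cdot b_{\fa_\C}(P(t)) \in \Z[t]$ (possible since $P$ and $b_{\fa_\C}$ have rational coefficients). The goal becomes showing $Q(0) = 0$, equivalently $b_{\fa_\C}(P(0)) = 0$. Next, for each prime $p \gg 0$ with $p \nmid N$, pick $e \geq 1$ with $p^e \equiv 1 \mod M$ — for instance $e$ a multiple of the multiplicative order of $p$ in $(\Z/M\Z)^\times$, which exists once $\gcd(p, M) = 1$. For such $(p,e)$ the hypothesis gives $\nu^{J_p}_{\fa_p}(p^e) = P(p^e)$, and the Proposition gives $b_{\fa_\C}(P(p^e)) \equiv 0 \mod p$, hence $Q(p^e) \equiv 0 \mod p$. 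But $p^e \equiv 1 \mod p$ is false in general — instead I use that $p \mid p^e$, so $Q(p^e) \equiv Q(0) \mod p$ (every monomial $t^k$ with $k \geq 1$ evaluated at $p^e$ is divisible by $p$). Therefore $Q(0) \equiv 0 \mod p$.

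Since this holds for all sufficiently large primes $p$ not dividing $N$, the integer $Q(0)$ is divisible by infinitely many primes, which forces $Q(0) = 0$, and hence $b_{\fa_\C}(P(0)) = 0$; that is, $P(0)$ is a root of $b_{\fa_\C}(s)$, as claimed.

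I do not expect any serious obstacle here — the argument is a standard "divisible by infinitely many primes implies zero" trick combined with the elementary observation $Q(p^e) \equiv Q(0) \mod p$. The only points requiring a little care are (a) ensuring that the finitely many bad primes (those dividing $N$, or the finitely many small primes excluded by the $p \gg 0$ in the Proposition and in the hypothesis) can be discarded without affecting the conclusion, and (b) checking that for each admissible large prime $p$ there really is an exponent $e \geq 1$ with $p^e \equiv 1 \mod M$, which holds because $p$ is a unit mod $M$ once $p > M$ or $p \nmid M$.
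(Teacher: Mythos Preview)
Your proof is correct and follows the same core strategy as the paper: show that $b_{\fa_\C}(P(0))$ (after clearing denominators) is divisible by infinitely many primes via the congruence $Q(p^e)\equiv Q(0)\bmod p$, hence must vanish. The one difference is in how the pairs $(p,e)$ with $p^e\equiv 1\bmod M$ are produced: the paper invokes Dirichlet's theorem to get infinitely many primes $p\equiv 1\bmod M$ (so any $e$ works), whereas you take every large prime $p$ coprime to $M$ and choose $e$ to be a multiple of the order of $p$ in $(\Z/M\Z)^\times$. Your route is slightly more elementary in that it avoids Dirichlet, and it also handles the rational-coefficient issue more carefully by explicitly clearing denominators; the paper's version is terser but relies on a deeper input. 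Either way the argument is the same ``divisible by infinitely many primes implies zero'' trick, and your writeup is fine as it stands.
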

\begin{proof}
	By Dirichlet's theorem, there are infinitely many primes $p$ with $p \equiv 1 \mod M$. Therefore, $b_{\fa_\C}(P(0)) \equiv b_{\fa_\C}(P(p^e))  \equiv 0 \mod p$ for infinitely many primes $p$, and thus $b_{\fa_\C}(P(0)) = 0$. 
\end{proof}
\subsection{The $\nu$-invariants of monomial ideals}

Fix a nonzero monomial ideal $\fa \sq \Z[x_1, \ds, x_n]$. In this setting whenever $J$ is a also a monomial ideal one can define the invariant $\nu^J_\fa(s)$ (c.f. Definition \ref{def-nu-invt}), in a characteristic-free way. First of all, given a monomial ideal $J \sq \Z[x_1, \ds, x_n]$ and a positive integer $q$ (not necessarily a prime power) we define an ideal $J^{[q]}$ of $\Z[x_1, \ds, x_n]$ as follows:
$$J^{[q]} := ( \mu^q : \mu \in J \text{ a monomial} ).$$
If $J$ is a monomial ideal containing $\fa$ in its radical we define
$$\nu^J_\fa(q) := \max \{n \geq 0 : \fa^n \not\sq J^{[q]}\}.$$
Observe that both of these notations are compatible with reduction mod-$p$ in the appropriate sense. 

We now state two theorems from \cite{BMSa06b}, which roughly say that the method suggested by Corollary \ref{cor-find-roots} for finding the roots of the Bernstein-Sato polynomial works for monomial ideals. While the behavior illustrated below has been shown to also hold for some examples of hypersurfaces \cite[Section 4]{MTW}, monomial ideals exhibit remarkable behavior in two ways: in order to recover all the roots it suffices to take $p^e \equiv 1 \mod M$ large and for $J$ to be a monomial ideal.

We state the theorems in a slightly weaker form which suffices for our purposes.

\begin{theorem}[{\cite[Thm. 4.1]{BMSa06b}}] \label{thm-BMS-mon-ideal-J}
	If $\fa \sq \Z[x_1, \ds, x_n]$ is a nonzero monomial ideal then there is a positive integer $M$ with the following property: if $J$ is a monomial ideal whose radical contains $\fa$ then there are rational numbers $\beta > 0$ and $\eta$ such that $\nu^J_\fa(q) = \beta q + \eta$ for all $q$ large enough with $q \equiv 1 \mod M$.
\end{theorem}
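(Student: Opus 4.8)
The plan is to reduce the statement to a purely combinatorial question about $\nu^J_\fa(q)$ as a function of $q$, and then to solve that combinatorial question using the structure of monomial ideals together with the dynamics provided by Corollary~\ref{nu-invt-dynamics-cor}. First I would set up coordinates: write $\fa = (x^{a_1}, \ds, x^{a_r})$ for its monomial generators, so that $\fa^n$ is generated by the monomials $x^v$ with $v$ in the $n$-th dilate of the Newton polyhedron $P(\fa) = \mathrm{conv}(\bigcup_i (a_i + \R^n_{\geq 0}))$, up to the lattice-point subtleties of integral closure. Similarly $J$ monomial gives $J^{[q]}$ generated by $\{\mu^q : \mu \in J\}$, so $\fa^n \sq J^{[q]}$ becomes the condition that every generating monomial of $\fa^n$ lies in $J^{[q]}$, i.e. is divisible by $q\cdot\mu$ for some monomial generator $\mu$ of $J$. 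Thus $\nu^J_\fa(q)$ is the largest $n$ for which some lattice point in the $n$-th dilate of $P(\fa)$ avoids all the ``shifted-and-scaled cones'' defining $J^{[q]}$.

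The key step is to recognize this as a parametric integer-programming / Ehrhart-type problem: for fixed $q$, $\nu^J_\fa(q)$ is the optimum of an integer program whose feasible region scales linearly in $q$ and $n$. By the theory of parametric integer linear programming (Barvinok, or more classically the quasi-polynomiality of the function $q \mapsto$ (largest $n$ with a lattice point in a rational polytope depending linearly on $q$ and $n$)), the function $q \mapsto \nu^J_\fa(q)$ is eventually a quasi-polynomial in $q$; since the feasible region grows linearly, its degree in $q$ is $1$, so there is a modulus $M$ (depending on $\fa$ and $J$) and rationals $\beta_q > 0$, $\eta_q$ depending only on $q \bmod M$ with $\nu^J_\fa(q) = \beta_q q + \eta_q$ for $q \gg 0$. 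To get a \emph{single} slope $\beta$ and intercept $\eta$ valid on the single residue class $q \equiv 1 \bmod M$ one restricts to that class, absorbing the residue-dependence; the positivity $\beta > 0$ follows because $\fa^n \not\sq \fa^{[q]} \supseteq J^{[q]}$ forces $\nu^J_\fa(q) \geq q-1$, say, growing linearly. Finally, one must choose $M$ \emph{uniformly in $J$}: here I would use that the normal fan of $P(\fa)$ has finitely many cones, that only finitely many monomial ideals $J$ arise as the ``relevant'' ones (the radical condition $\sqrt{J}\supseteq\fa$ together with the observation that $\nu^J_\fa(q)$ only depends on $J$ through finitely many truncated data, e.g. via Proposition~\ref{nu-invt-trun-ti-prop} and Corollary~\ref{nu-invt-dynamics-cor} which bound the relevant range of exponents), so one takes the lcm of the finitely many moduli.

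The main obstacle, and where I expect to spend the most care, is exactly this uniformity in $J$: a priori there are infinitely many monomial ideals $J$ whose radical contains $\fa$ (arbitrarily high powers of variables, etc.), so one cannot naively take an lcm over all of them. The resolution should be that $\nu^J_\fa(q)$ is insensitive to enlarging $J$ in directions ``far out'', so the supremum of the relevant moduli is attained among finitely many $J$; making this precise — bounding which monomials in $J$ can possibly affect $\nu^J_\fa(q)$ in terms of the generators of $\fa$ alone — is the crux. An alternative, cleaner route that avoids general parametric IP: use Corollary~\ref{nu-invt-dynamics-cor} to show directly that for $q \gg 0$ in a fixed residue class the function $n \mapsto (\fa^n \sq J^{[q]}?)$ has a threshold that shifts by a controlled amount as $q$ increases by $M$, giving the affine-linear formula by a telescoping/induction argument; this is likely how \cite{BMSa06b} proceeds and is the version I would write up if the parametric-IP black box feels too heavy.
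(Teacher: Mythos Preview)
The paper does not prove this theorem: it is quoted from \cite[Thm.~4.1]{BMSa06b} as background, so there is no proof in the present paper to compare against. That said, your sketch is in the right spirit---the result in \cite{BMSa06b} is indeed obtained by a polyhedral analysis of $q\mapsto\nu^J_\fa(q)$---but several of your steps are not yet under control.

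First, your description of $\fa^n$ via the $n$-th dilate of the Newton polyhedron conflates $\fa^n$ with its integral closure; the exponent set of $\fa^n$ is $\{\sum_j c_j a_j : c_j\in\N,\ \sum_j c_j=n\}+\N^n$, not the lattice points of $nP(\fa)$, and the ``subtleties'' you wave at are exactly what must be tracked. Second, the condition $x^v\notin J^{[q]}$ is not convex in $v$ (it is the complement of a union of shifted orthants), so the feasible region is not a single rational polytope and one cannot invoke a single Ehrhart or parametric-IP black box without first decomposing. Third---and this is the point you correctly flag as the crux---the uniformity of $M$ in $J$ is not resolved: your suggestion that only finitely many $J$ are relevant is not correct as stated, and your alternative route via Corollary~\ref{nu-invt-dynamics-cor} is misplaced, since that corollary is a characteristic-$p$ statement about $\nu$-invariants of level $p^e$ in a regular $F$-finite ring, whereas here $q$ is an arbitrary positive integer and we are working in $\Z[x_1,\ds,x_n]$. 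The actual mechanism that makes $M$ depend only on $\fa$ is that the period of the quasi-polynomial is controlled by the denominators appearing in the vertices of the rational polytopes cut out by the exponent matrix of the generators of $\fa$; the data of $J$ enter only on the right-hand side (as integer multiples of $q$) and so contribute no new denominators. Making this precise is the content of the argument in \cite{BMSa06b}.
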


Observe that, by Corollary \ref{cor-find-roots}, the rational number $\eta$ in Theorem \ref{thm-BMS-mon-ideal-J} will be a root of $b_{\fa_\C}(s)$. 

\begin{theorem}[{\cite[Thm. 4.9]{BMSa06b}}] \label{thm-BMS-mon-ideal}
	Let $\fa \sq \Z[x_1, \ds, x_n]$ be a nonzero monomial ideal and $\alpha$ be a root of $b_{\fa_\C}(s)$. Then there is a monomial ideal $J$ together with a rational number $\beta$ and a positive integer $M$ such that $\nu^J_\fa(q) = \beta q + \alpha$ for $q$ large enough with $q \equiv 1  \mod M$. 
\end{theorem}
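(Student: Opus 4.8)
The plan is to combine Theorem~\ref{thm-BMS-mon-ideal-J} with the combinatorial description of the roots of $b_{\fa_\C}(s)$ for monomial ideals, and to check that the two match up. By Theorem~\ref{thm-BMS-mon-ideal-J} and Corollary~\ref{cor-find-roots}, for every monomial ideal $J$ with $\sqrt{J} \supseteq \fa$ there are rationals $\beta_J > 0$, $\eta_J$ and an integer $M$ with $\nu^J_\fa(q) = \beta_J q + \eta_J$ for all large $q \equiv 1 \pmod M$, and $\eta_J$ is always a root of $b_{\fa_\C}(s)$; thus $\{\eta_J : J \text{ monomial}, \sqrt J \supseteq \fa\}$ is a subset of the root set of $b_{\fa_\C}(s)$. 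The content of Theorem~\ref{thm-BMS-mon-ideal} is the reverse inclusion — every root is some $\eta_J$ — and, crucially, $M$ is allowed to depend on the chosen root, so no uniformity is needed here (this is what makes this direction easier than Theorem~\ref{thm-BMS-mon-ideal-J}).

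For the reverse inclusion I would invoke the combinatorial formula for the roots of $b_{\fa_\C}(s)$ proved earlier in \cite{BMSa06b}. Writing $x^{a_1}, \ds, x^{a_r}$ for the minimal monomial generators of $\fa$ and $P$ for the Newton polyhedron (the convex hull of $\bigcup_i (a_i + \R_{\ge 0}^n)$), that formula records each root $\alpha$ by a facet $F$ of $P$ not contained in a coordinate hyperplane $\{x_i = 0\}$ — with primitive inner normal $w \in \Z_{\ge 0}^n$ and level $d := m_\fa(w) = \min_i \langle w, a_i\rangle$ — together with a lattice point $e$ ranging over an explicit cone attached to $F$, so that $\alpha = -\langle w, e\rangle/d$, and conversely every such $(F,e)$ gives a root. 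Given $\alpha$ with this data, the plan is to build a monomial ideal $J = J(\alpha)$ whose staircase $q\cdot P(J)$ first engulfs the point of the exponent semigroup of $\fa$ obtained from $\approx \beta q$ copies of a vertex of $F$ exactly at the shift prescribed by $e$.

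The mechanism is already visible in the principal case $J = (x^b)$ (with $b \in \Z_{\ge 0}^n$ chosen so that $\sqrt J \supseteq \fa$ and $\nu^J_\fa(q) < \infty$): since $J^{[q]} = (x^{qb})$ is integrally closed, $\fa^n \subseteq J^{[q]}$ iff $n\,m_\ell \ge q\, b_\ell$ for all $\ell$, where $m_\ell := \min_i (a_i)_\ell$, so $\nu^J_\fa(q) = \lceil \rho q\rceil - 1$ with $\rho := \max_\ell b_\ell/m_\ell$. Taking $M$ to clear the denominator of $\rho$ and imposing $q \equiv 1 \pmod M$ collapses the ceiling to $\nu^J_\fa(q) = \rho q - \rho$ when $\rho \le 1$, i.e. $\eta_J = -\rho$; this realizes the roots arising as such $\rho$. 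The remaining roots need non-principal $J$: I would generate $J$ by monomials so that $P(J)$ has a facet parallel to a rescaling of $F$, positioned according to $e$, and run the analogous computation — now $\fa^n \not\subseteq J^{[q]}$ is again governed by a single escaping semigroup point, and enlarging $M$ to absorb the denominators of $w$, $d$ and the positional offsets makes every floor correction constant in $q$, yielding $\eta_J = -\langle w, e\rangle/d = \alpha$ with $\beta_J$ the corresponding slope. Matching this against the combinatorial list of roots finishes the argument.

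The main difficulty is pinning $\eta_J$ to $\alpha$ exactly. This forces: (i) using the combinatorial root description in its sharp form — the precise cone in which $e$ is allowed to lie — since a coarser version yields the wrong constant term; (ii) choosing $J$ so that the maximum defining $\nu^J_\fa(q)$ (cf.\ Definition~\ref{def-nu-invt}) is controlled by $F$ and $e$ and not by some competing facet of $P$, which is the bulk of the work and amounts to a constrained optimization over $P$ and $P(J)$; and (iii) handling the fact that for non-principal $J$ the ideal $J^{[q]}$ need not be integrally closed, so $\fa^n \subseteq J^{[q]}$ is a priori stricter than $nP \subseteq qP(J)$ — the resulting discrepancy in $\nu^J_\fa(q)$ is bounded, but one must show it disappears once $q$ is restricted to the residue class mod $M$, lest the constant term be off by a bounded amount.
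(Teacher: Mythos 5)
The paper does not prove this statement: Theorem~\ref{thm-BMS-mon-ideal} is imported verbatim from Budur--\Mustata--Saito \cite[Thm.~4.9]{BMSa06b}, so there is no in-paper argument to compare your sketch against. Judged on its own, your outline correctly identifies the shape of the argument in \cite{BMSa06b} --- use the Newton-polyhedron description of the roots of $b_{\fa_\C}(s)$ and, for each root $\alpha$, manufacture a monomial ideal $J$ whose staircase realizes the facet/cone datum attached to $\alpha$ --- but it stops short of a proof. The three issues you flag at the end (the sharp form of the root description, the optimization showing that $\nu^J_\fa(q)$ is governed by the intended facet rather than a competitor, and the gap between membership in $J^{[q]}$ and membership in $q\cdot P(J)$ for non-principal $J$) are precisely where the substance of \cite[\S 4]{BMSa06b} lives, and none of them is resolved here.

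In particular, (iii) is not merely a bounded-error nuisance to be absorbed by the congruence on $q$: for monomial $J=(x^{b_1},\ds,x^{b_k})$ one has $J^{[q]}=(x^{qb_1},\ds,x^{qb_k})$, and $x^c\in J^{[q]}$ requires componentwise domination by a single $qb_i$, which is strictly stronger than the convex condition $c\in qP(J)$; collapsing the two requires the careful choice of $J$ (together with the restriction $q\equiv 1 \bmod M$) to do genuine work, and showing it does so is inseparable from (ii). Without carrying out the construction, this remains a plausible plan rather than a proof --- which is acceptable here only because the result is a black-box citation in this paper, not something the paper establishes.
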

\subsection{Bernstein-Sato roots in positive characteristic}

We begin by reviewing the notion of Bernstein-Sato root from \cite{QG19}, to which we refer the reader for details. Let $R$ be a regular $F$-finite ring of prime characteristic $p>0$ and let $\fa \sq R$ be an ideal.

Using a choice of generators $\fa = (f_1, \ds, f_r)$ for $\fa$ one defines a directed system of modules $N^1 \to N^2 \to N^3 \to \cds$ and a family $s_{p^0}, s_{p^1}, s_{p^2}, \ds $ of differential operators on $R[t_1, \ds, t_r]$ with the following properties.
\begin{enumerate}
	\item The operators $s_{p^0}, s_{p^1}, \ds, s_{p^{e-1}}$ act on the module $N^e$ and the maps $N^e \to N^{e+1}$ are compatible with respect to this action.
	\item The operators $s_{p^i}$ are pairwise commuting, i.e. $s_{p^i} s_{p^j} = s_{p^j} s_{p^i}$ for all $i, j \geq 0$. 
	\item The operators $s_{p^i}$ satisfy $s_{p^i}^p = s_{p^i}$.
\end{enumerate}
Because we are in characteristic $p$, property (iii) is equivalent to $\prod_{j = 0}^{p-1} (s_{p^i} - j) = 0$. From properties (ii) and (iii) it follows that if an integer $e> 0$ is fixed then any module for the operators $s_{p^0}, s_{p^1}, \ds, s_{p^{e-1}}$ splits as a direct sum of multi-eigenspaces for these operators. In particular, we have
$$N^e = \bigoplus_{\alpha \in \F_p^e} N^e_\alpha$$
where, given $\alpha = (\alpha_0, \ds, \alpha_{e-1}) \in \F_p^e$ we define $N^e_\alpha := \{u \in N^e : s_{p^i} \cdot u = \alpha_i u \text{ for all } i=0,1,\ds, e-1\}$. 

Let $N = \varinjlim_e N^e$ be the limit of the directed system $N^e$. Since we have multi-eigenspace decompositions for each $N^e$ it is reasonable to ask whether $N$ has a multi-eigenspace decomposition -- although, in this case, it will be for infinitely many operators. The answer is positive and it leads to the notion of Bernstein-Sato root.

\begin{theorem}[{\cite[Prop. 6.1]{QG19}}] \label{thm-bs-root-decompn}
	We have a decomposition $N = \bigoplus_{\alpha \in \F_p^\N} N_\alpha$ where, given $\alpha = (\alpha_0, \alpha_1, \ds) \in \F_p^\N$, $N_\alpha = \{u \in N : s_{p^i} \cdot u = \alpha_i u \text{ for all } i \geq 0\}.$ Moreover, the number of $\alpha \in \F_p^\N$ for which $N_\alpha \neq 0$ is finite.
\end{theorem}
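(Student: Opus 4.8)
The plan is to perform a few formal manipulations with the colimit $N = \varinjlim_e N^e$ and thereby reduce the whole statement to a single finiteness assertion about the levelwise eigenspace decompositions. First I would note that each $s_{p^i}$ descends to a well-defined operator on $N$: by property (i) it acts on every $N^e$ with $e > i$ compatibly with the transition maps, and by (ii), (iii) the resulting operators on $N$ pairwise commute and satisfy $s_{p^i}^p = s_{p^i}$, equivalently $\prod_{j=0}^{p-1}(s_{p^i} - j) = 0$. So what remains is to show $N = \bigoplus_\alpha N_\alpha$ (direct sum over $\alpha \in \F_p^\N$) and that $N_\alpha \ne 0$ for only finitely many $\alpha$.

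Next I would introduce $M^e \sq N$, the image of $N^e \to N$, so $M^1 \sq M^2 \sq \cds$ and $N = \bigcup_e M^e$. Since $s_{p^0}, \ds, s_{p^{e-1}}$ act on $M^e$ and are semisimple in the above sense, $M^e = \bigoplus_{\beta \in \F_p^e} M^e_\beta$, with $M^e_\beta$ the image of $N^e_\beta$. The one computation I would record is the compatibility between consecutive levels: if $0 \ne v \in M^e$ is a simultaneous eigenvector with eigenvalue $\beta \in \F_p^e$, then decomposing $v$ inside $M^{e+1}$ only the pieces $M^{e+1}_\gamma$ with $\gamma$ restricting to $\beta$ appear (apply the $s_{p^i}$, $i < e$), and at least one of them is nonzero. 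This gives that $\{\alpha \in \F_p^{e-1} : M^{e-1}_\alpha \ne 0\}$ lies in the image of the truncation map from $\{\beta \in \F_p^e : M^e_\beta \ne 0\}$, so $a_e := \#\{\beta \in \F_p^e : M^e_\beta \ne 0\}$ is non-decreasing. Directness of $\sum_\alpha N_\alpha$ is then immediate: from a relation $\sum_{\alpha \in S} u_\alpha = 0$ with $S$ finite and $u_\alpha \in N_\alpha$, pick $e$ large enough that the truncations $\alpha^{(e)}$ are distinct and all $u_\alpha \in M^e$, and use that the $u_\alpha$ sit in distinct summands of $M^e = \bigoplus_\beta M^e_\beta$.

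The crux is the claim that the $a_e$ are bounded above. Granting it, $a_e$ stabilizes to some $B_0$ past a level $e_0$; then for $e \ge e_0$ the truncation map $\{\beta : M^e_\beta \ne 0\} \to \F_p^{e-1}$ has image of size $\le B_0$ containing the size-$B_0$ set $\{\alpha : M^{e-1}_\alpha \ne 0\}$, hence is a bijection onto it. Consequently there are exactly $B_0$ sequences $\beta^1, \ds, \beta^{B_0} \in \F_p^\N$ all of whose truncations are nonzero eigenvalues, and $\{\beta : M^e_\beta \ne 0\} = \{(\beta^j)^{(e)}\}_j$ for $e \ge e_0$. From here: any $0 \ne u \in N_\alpha$ lies in $M^e_{\alpha^{(e)}}$ for all large $e$, forcing $\alpha \in \{\beta^1, \ds, \beta^{B_0}\}$, which bounds the number of nonzero $N_\alpha$ by $B_0$; and for surjectivity, given $u \in N$ with $u \in M^e$ ($e \ge e_0$) one writes $u = \sum_j u_j$ with $u_j \in M^e_{(\beta^j)^{(e)}}$, and the compatibility above together with the bijectivity of the truncation maps for $e \ge e_0$ forces $u_j$ to stay in $M^{e'}_{(\beta^j)^{(e')}}$ for every $e' \ge e$, so that $s_{p^i} u_j = \beta^j_i u_j$ for all $i$, i.e.\ $u_j \in N_{\beta^j}$.

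The step I expect to be the real obstacle is precisely the bound on $a_e$, as it is the only one that is not formal: a priori each $N^e$ carries all $p^e$ eigenvalues, so the bound has to be squeezed out of the transition maps $N^e \to N^{e+1} \to \cds$ collapsing most of them. My plan here would be to use the explicit description of the $N^e$ from \cite{QG19} --- built from the Cartier operators $\cC^e_R$ acting on the powers $\fa^n$ (cf.\ Propositions \ref{prop-Cef-repn} and \ref{nu-invt-trun-ti-prop}) --- to identify the surviving eigenvalues $(\beta^j)^{(e)}$ with residues prescribed by the $\nu$-invariants of level $e$, and then to invoke the eventual regularity of those invariants (Corollary \ref{nu-invt-dynamics-cor}, together with the fact that once their linear part is removed only finitely many values persist as $e \to \infty$). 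Alternatively one could try to prove directly that $N$ is finitely generated over a suitable ring of operators, which would limit the joint eigenvalues of the commuting semisimple operators $s_{p^i}$ and again bound $a_e$.
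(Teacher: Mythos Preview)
The paper does not prove this statement at all: Theorem~\ref{thm-bs-root-decompn} is quoted verbatim as a background result from \cite[Prop.~6.1]{QG19}, with no argument given here. So there is no ``paper's own proof'' to compare your proposal against; any comparison would have to be with the original argument in \cite{QG19}, which is outside the present paper.

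On the merits of your sketch: the formal reductions are sound. The operators $s_{p^i}$ do descend to $N$, the levelwise multi-eigenspace decompositions $M^e = \bigoplus_\beta M^e_\beta$ are correct, your directness argument is fine, and your deduction of both finiteness and spanning from the stabilization of $a_e$ is clean. You have also correctly isolated the only non-formal step, namely the uniform bound on $a_e = \#\{\beta \in \F_p^e : M^e_\beta \neq 0\}$. What you have written for that step, however, is a plan rather than a proof: invoking ``the explicit description of the $N^e$ from \cite{QG19}'' and ``eventual regularity of the $\nu$-invariants'' is exactly where the content of \cite[Prop.~6.1]{QG19} lives, and Corollary~\ref{nu-invt-dynamics-cor} by itself does not bound the number of $\nu$-invariants modulo $p^e$. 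So your proposal is a correct skeleton with the key input still outsourced to the reference being cited.
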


\begin{definition}[{\cite[Def. 6.2]{QG19}}]
	A $p$-adic integer $\alpha$ with $p$-adic expansion $\alpha = \alpha_0 + p \alpha_1 + p^2 \alpha_2 + \cds$ (i.e. $\alpha_i \in \{0, 1, \ds, p-1\}$) is a Bernstein-Sato root of $\fa$ if $N_{(\alpha_0, \alpha_1, \ds, )} \neq 0$. 
\end{definition}

Even though Bernstein-Sato roots are a-priori defined as $p$-adic integers, they turn out to be rational (i.e. they lie in the subring $\Z_{(p)}$ of $\Z_p$) and negative, and they are independent of the initial choice of generators for $\fa$. 

We end by stating the following characterization of Bernstein-Sato roots, which expresses them in terms of the $\nu$-invariants of $\fa$.

\begin{proposition} [{\cite[Prop. 6.13]{QG19}}]  \label{prop-new-charact}
	The following sets are equal.
	\begin{enumerate}[(a)]
		\item The set of Bernstein-Sato roots of the ideal $\fa$.
		\item The set of $p$-adic limits of sequences $(\nu_e) \sq \N$ where $\nu_e \in \nu^\bullet_\fa(p^e)$. 
		\item The set
		$$\bigcap_{e = 0}^\infty \overline{\nu^\bullet_\fa(p^e)},$$
		where $(\bar{ \ } )$ stands for $p$-adic closure.
	\end{enumerate}
\end{proposition}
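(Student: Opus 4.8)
The plan is to prove (b) $=$ (c) by a direct $p$-adic topology argument, and then (a) $\sq$ (c) and (b) $\sq$ (a); since these give (b) $\sq$ (a) $\sq$ (c) $=$ (b), the three sets coincide. For (b) $=$ (c) one uses only that the $\nu^\bullet_\fa(p^e)$ form a descending chain. If $\alpha=\lim_e\nu_e$ with $\nu_e\in\nu^\bullet_\fa(p^e)$, then for every fixed $e_0$ the tail $(\nu_e)_{e\geq e_0}$ lies in $\nu^\bullet_\fa(p^{e_0})$ and converges to $\alpha$, so $\alpha\in\overline{\nu^\bullet_\fa(p^{e_0})}$; letting $e_0$ vary gives (b) $\sq$ (c). Conversely, if $\alpha\in\bigcap_e\overline{\nu^\bullet_\fa(p^e)}$ then for each $e$ one may pick $\nu_e\in\nu^\bullet_\fa(p^e)$ with $\nu_e\equiv\alpha\pmod{p^e}$, and then $\nu_e\to\alpha$.

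The crucial input for the rest is a finite-level dictionary. Writing $\hat\beta:=\sum_{i=0}^{e-1}\beta_ip^i$ for $\beta=(\beta_0,\ds,\beta_{e-1})\in\F_p^e$, I would establish
$$N^e_\beta\neq 0\quad\Longleftrightarrow\quad \exists\, n\in\nu^\bullet_\fa(p^e):\ n\equiv\hat\beta\pmod{p^e}.$$
This should come out of the construction of $N^e$ in \cite{QG19}, in which $N^e$ decomposes as $\bigoplus_{n\geq 0}V^e_n$ with $s_{p^i}$ acting on $V^e_n$ by the $i$-th $p$-adic digit of $n$ (so that $N^e_\beta$ collects those $V^e_n$ with $n\equiv\hat\beta\pmod{p^e}$), and with $V^e_n\neq 0$ exactly when $\cC^e_R\cdot\fa^n\neq\cC^e_R\cdot\fa^{n+1}$, i.e., by Proposition \ref{nu-invt-trun-ti-prop}, exactly when $n\in\nu^\bullet_\fa(p^e)$.

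Granting this, fix $\alpha=(\alpha_0,\alpha_1,\ds)\in\F_p^\N$ and write $\alpha|_e=(\alpha_0,\ds,\alpha_{e-1})$; note $N_\alpha=\varinjlim_e N^e_{\alpha|_e}$, as $N=\varinjlim_e N^e$ and the transition maps respect the $s_{p^i}$-actions. If $N^{e+1}_{\alpha|_{e+1}}\neq 0$, then an $n\in\nu^\bullet_\fa(p^{e+1})$ with $n\equiv\widehat{\alpha|_{e+1}}\pmod{p^{e+1}}$ also lies in $\nu^\bullet_\fa(p^e)$ and satisfies $n\equiv\widehat{\alpha|_e}\pmod{p^e}$, so $N^e_{\alpha|_e}\neq 0$; thus vanishing of $N^e_{\alpha|_e}$ propagates to all larger $e$, and if some $N^{e_0}_{\alpha|_{e_0}}=0$ then $N_\alpha$ vanishes, being a sequential colimit whose terms are eventually zero. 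Hence if $\alpha$ is a Bernstein-Sato root then $N^e_{\alpha|_e}\neq 0$ for every $e$, so the dictionary yields $n_e\in\nu^\bullet_\fa(p^e)$ with $n_e\equiv\alpha\pmod{p^e}$; then $\alpha=\lim_e n_e$ lies in (b) $=$ (c), which proves (a) $\sq$ (c). For (b) $\sq$ (a), take $\alpha$ in (b), realized as $\alpha=\lim_e\nu_e$ with $\nu_e\in\nu^\bullet_\fa(p^e)$, and let $\beta^{(e)}\in\F_p^e$ collect the first $e$ $p$-adic digits of $\nu_e$; the dictionary gives $N^e_{\beta^{(e)}}\neq 0$. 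Using that the transition maps $N^e\to N^{e+1}$, hence the maps $N^e\to N$, are injective (shown in \cite{QG19}), a nonzero vector of $N^e_{\beta^{(e)}}$ has nonzero image $\bar v\in N$; since the transition maps respect the $s_{p^i}$-action for $i<e$ we get $s_{p^i}\bar v=\beta^{(e)}_i\bar v$, so the components of $\bar v$ in $N=\bigoplus_\gamma N_\gamma$ only involve $\gamma$ with $\gamma|_e=\beta^{(e)}$. Hence there is a Bernstein-Sato root $\gamma_e$ with $\gamma_e\equiv\nu_e\pmod{p^e}$, so $\gamma_e\to\alpha$. Since there are only finitely many Bernstein-Sato roots (Theorem \ref{thm-bs-root-decompn}), a $p$-adically convergent sequence of them is eventually constant, so $\alpha$ is one.

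The formal skeleton above reduces the proposition to the finite-level dictionary and to the injectivity of the maps $N^e\to N^{e+1}$, and I expect that to be where the real work lies: one must genuinely open up the construction of the modules $N^e$ and the operators $s_{p^i}$ from \cite{QG19}, identify their $s_{p^i}$-eigenspaces with the ideals $\cC^e_R\cdot\fa^n$ via Proposition \ref{nu-invt-trun-ti-prop}, and verify that the eigenvalue labels are precisely the $p$-adic digits of the indices (up to a fixed continuous relabeling that would not affect the limiting arguments). I would also expect Corollary \ref{nu-invt-dynamics-cor} on the dynamics of $\nu$-invariants to enter in making that dictionary — and its compatibility across levels — come out cleanly.
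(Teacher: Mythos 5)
The paper states this proposition as a citation of \cite[Prop.~6.13]{QG19} and does not reprove it, so there is no in-paper argument to compare yours against. Evaluated on its own terms, your proposal is a structurally sound reduction: the argument that (b) $=$ (c) from the descending chain $\nu^\bullet_\fa(p^0) \supseteq \nu^\bullet_\fa(p^1) \supseteq \cds$ is correct; the bootstrap for (a) $\sq$ (c) (showing vanishing of $N^e_{\alpha|_e}$ propagates forward and hence $N_\alpha = \varinjlim_e N^e_{\alpha|_e}$ forces all finite-level eigenspaces to be nonzero) is correct; and the argument for (b) $\sq$ (a) (using injectivity $N^e \hookrightarrow N$ to get a nonzero class in $\bigoplus_{\gamma: \gamma|_e = \beta^{(e)}} N_\gamma$, hence a Bernstein--Sato root $\gamma_e \equiv \nu_e \pmod{p^e}$, then invoking finiteness of the root set) is correct.

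That said, your proposal is explicitly a \emph{reduction} rather than a proof: the entire weight rests on the ``finite-level dictionary''
\[
N^e_\beta\neq 0\ \Longleftrightarrow\ \exists\, n\in\nu^\bullet_\fa(p^e):\ n\equiv\hat\beta\pmod{p^e},
\]
together with injectivity of $N^e\to N^{e+1}$, and you flag both as unverified facts to be extracted from the construction in \cite{QG19}. You are right that this is where the mathematical content lies — the claimed decomposition $N^e = \bigoplus_n V^e_n$ with $s_{p^i}$ acting by the $i$-th $p$-adic digit of $n$, and the identification of nonvanishing of $V^e_n$ with $\cC^e_R\cdot\fa^n \neq \cC^e_R\cdot\fa^{n+1}$ (Proposition~\ref{nu-invt-trun-ti-prop}), require genuinely opening the construction. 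One small point worth separating out: your verification that $N_\alpha = \varinjlim_e N^e_{\alpha|_e}$ in the (a) $\sq$ (c) step does not actually need injectivity, only that the transition maps respect the $s_{p^i}$-action for $i<e$; injectivity is only needed in the (b) $\sq$ (a) step to lift a nonzero finite-level eigenvector to a nonzero class in $N$.
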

\section{Main result} \label{scn-main}

Let $\fa \sq \Z[x_1, \ds, x_n]$ be a monomial ideal. One can then consider the expansion $\fa_\C $ of $\fa$ in the polynomial ring $\C[x_1, \ds, x_n]$ and let $b_{\fa_\C}(s)$ be its Bernstein-Sato polynomial. On the other hand, given a prime number $p$ we can also consider the ideal $\fa_p$, the image of $\fa$ in $\F_p[x_1, \ds, x_n]$ and consider its set of Bernstein-Sato roots (which, recall, lie in $\Z_{(p)}$).

In this section, we use results from \cite{BMSa06b} to show the following.

\begin{theorem} \label{thm-BSroot-mon-ideal}
	Let $\fa \sq \Z[x_1, \ds, x_n]$ be a monomial ideal. Then the set of roots of $b_{\fa_\C}(s)$ coincides with the set of Bernstein-Sato roots of $\fa_p$ for $p$ large enough.
\end{theorem}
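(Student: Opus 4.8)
The plan is to combine the two theorems of \cite{BMSa06b} (Theorems \ref{thm-BMS-mon-ideal-J} and \ref{thm-BMS-mon-ideal}) with the $\nu$-invariant characterization of Bernstein-Sato roots (Proposition \ref{prop-new-charact}), matched up via reduction mod $p$. The bridge is that for a monomial ideal the characteristic-free invariant $\nu^J_\fa(q)$ with $q = p^e$ specializes to the honest $\nu$-invariant $\nu^{J_p}_{\fa_p}(p^e)$, and that restricting to monomial $J$ still computes the full set $\nu^\bullet_{\fa_p}(p^e)$ — this second point needs a small lemma, since Proposition \ref{prop-new-charact} ranges over all ideals $J$ with $\fa_p \sq \sqrt J$. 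I would first reduce to $\fa \sq (x_1,\dots,x_n)$ (if $\fa = R$ both sides are empty, and otherwise a coordinate-free/degree argument or the fact that adding units does not change $b$ or the roots handles it); in fact a cleaner route is to observe that for a monomial ideal $\fa$ one may always take $J$ monomial in computing $\nu^\bullet_{\fa_p}(p^e)$, because $\cC^e \cdot \fa^n$ is a monomial ideal (by Proposition \ref{prop-Cef-repn}, since $\fa^n$ is spanned by monomials and the $g_\gamma$ are then monomials), so by Proposition \ref{nu-invt-trun-ti-prop} the set $\nu^\bullet_{\fa_p}(p^e)$ is unchanged if we only allow monomial $J$ — more precisely each $\nu^J$ for arbitrary $J$ is dominated by, and realized by, a monomial $J$.

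Next, for the containment ``roots of $b_{\fa_\C}$ $\subseteq$ Bernstein-Sato roots of $\fa_p$'': let $\alpha$ be a root of $b_{\fa_\C}(s)$. By Theorem \ref{thm-BMS-mon-ideal} there is a monomial ideal $J$, a rational $\beta$ and an integer $M$ with $\nu^J_\fa(q) = \beta q + \alpha$ for $q$ large with $q \equiv 1 \bmod M$. Fix any prime $p$ with $p \nmid M$ (so some $p^e \equiv 1 \bmod M$, in fact infinitely many such $e$ by order considerations) and large enough that the asymptotic holds; then $\nu^{J_p}_{\fa_p}(p^e) = \beta p^e + \alpha$ along the arithmetic subprogression of exponents $e$ with $p^e \equiv 1 \bmod M$, and each such integer lies in $\nu^\bullet_{\fa_p}(p^e)$. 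The sequence $\beta p^e + \alpha$, taken along $e \to \infty$ in that subprogression, $p$-adically converges to $\alpha$ (since $\beta p^e \to 0$ and $\alpha$ is fixed; one has to be slightly careful that $\beta p^e + \alpha$ is a nonnegative integer for these $e$, which is exactly what the theorem guarantees). Filling in the missing levels $e$ with arbitrary elements of $\nu^\bullet_{\fa_p}(p^e)$, Proposition \ref{prop-new-charact}(b) gives that $\alpha$ is a Bernstein-Sato root of $\fa_p$. This works for all $p$ outside a finite set (the finitely many primes dividing $M$ together with those below the threshold), and since $b_{\fa_\C}$ has finitely many roots we may enlarge the excluded set uniformly.

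For the reverse containment ``Bernstein-Sato roots of $\fa_p$ $\subseteq$ roots of $b_{\fa_\C}$'' (for $p \gg 0$): take the integer $M$ from Theorem \ref{thm-BMS-mon-ideal-J}, which depends only on $\fa$, and a prime $p \nmid M$ past the threshold. For each monomial $J$ with $\fa \sq \sqrt J$ we get $\nu^J_\fa(q) = \beta_J q + \eta_J$ for $q \equiv 1 \bmod M$ large, hence $\nu^{J_p}_{\fa_p}(p^e) = \beta_J p^e + \eta_J$ along that subprogression of $e$; by the monomial reduction above, for a fixed level $e$ every element of $\nu^\bullet_{\fa_p}(p^e)$ is of the form $\nu^{J_p}_{\fa_p}(p^e)$ for some monomial $J$. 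Now let $\alpha$ be a Bernstein-Sato root of $\fa_p$; by Proposition \ref{prop-new-charact}(b) it is a $p$-adic limit $\alpha = \lim_e \nu_e$ with $\nu_e \in \nu^\bullet_{\fa_p}(p^e)$. Restricting to the subprogression of levels with $p^e \equiv 1 \bmod M$ (and $e$ large), write $\nu_e = \beta_{J_e} p^e + \eta_{J_e}$; since there are only finitely many possible $(\beta_J, \eta_J)$ appearing (the monomial ideals $J$ that can occur as an ``optimal'' $J$ form a finite set — e.g. only finitely many give the $\nu$-invariants by Proposition \ref{nu-invt-trun-ti-prop}, or one bounds the relevant $J$ by $\fa$-primary monomial ideals of bounded degree), some pair $(\beta, \eta)$ occurs infinitely often, and along that sub-subsequence $\nu_e = \beta p^e + \eta \to \eta$ $p$-adically, forcing $\alpha = \eta$; and $\eta = \nu^J_\fa(q) - \beta q$ is a root of $b_{\fa_\C}(s)$ by Corollary \ref{cor-find-roots}. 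The main obstacle I anticipate is precisely this finiteness step — controlling which monomial ideals $J$, equivalently which pairs $(\beta_J,\eta_J)$, can appear — and checking uniformity of all thresholds in $p$ so that a single cofinite set of primes works for both inclusions simultaneously; both should follow from the finiteness statements already built into Theorem \ref{thm-bs-root-decompn} and Proposition \ref{prop-new-charact} (finitely many Bernstein-Sato roots, hence finitely many relevant $\nu$-invariant asymptotics) together with the fact that $M$ in Theorem \ref{thm-BMS-mon-ideal-J} is independent of $J$.
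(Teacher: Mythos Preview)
Your forward inclusion (roots of $b_{\fa_\C}$ are Bernstein-Sato roots of $\fa_p$) is essentially the paper's argument. The reduction to monomial $J$ in computing $\nu^\bullet_{\fa_p}(p^e)$ is also correct and is what the paper uses implicitly (via Proposition \ref{prop-Cef-repn} and Lemma \ref{mu-J-lemma}).

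The reverse inclusion has a genuine gap, and it is precisely the one you flag. You write each $\nu_e$ as $\beta_{J_e} p^e + \eta_{J_e}$ using Theorem \ref{thm-BMS-mon-ideal-J}, but that theorem only gives the linear formula for $q$ \emph{large enough depending on $J$}. Since your $J_e$ varies with $e$, you cannot conclude that $p^e$ is past the threshold for $J_e$, so the decomposition $\nu_e = \beta_{J_e} p^e + \eta_{J_e}$ is not justified. Your two proposed fixes do not close this: Proposition \ref{nu-invt-trun-ti-prop} gives only level-by-level finiteness, and invoking ``finitely many Bernstein-Sato roots, hence finitely many asymptotics'' is circular here. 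The suggestion to ``bound the relevant $J$ by monomial ideals of bounded degree'' is the right instinct, but it needs an actual mechanism.

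The paper supplies that mechanism in Proposition \ref{prop-root-to-J-char-p}: starting from $\nu_e \to \alpha$ with $\nu_e \in \nu^\bullet_{\fa_p}(p^{ed})$ and $\nu_e \equiv \alpha \bmod p^{ed}$, one first uses Corollary \ref{nu-invt-dynamics-cor} to replace $\nu_e$ by a $\nu$-invariant in the same residue class with $0 \leq \nu_e < r p^{ed}$; then Lemma \ref{testideal-degree-lemma} bounds the generating degree of $\cC^{ed}_R \cdot \fa^{\nu_e}$ by a constant independent of $e$. This leaves only finitely many candidate monomials $\mu$, so by pigeonhole a single $\mu$ (hence, via Lemma \ref{mu-J-lemma}, a single monomial ideal $J$) witnesses $\nu^J_{\fa_p}(p^{e_i d}) = \beta p^{e_i d} + \alpha$ along an infinite subsequence. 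With $J$ now fixed, Theorem \ref{thm-BMS-mon-ideal-J} applies cleanly to identify $\alpha$ with the corresponding $\eta$, and Corollary \ref{cor-find-roots} finishes. In short, the missing step is not a soft finiteness principle but the concrete degree bound of Lemma \ref{testideal-degree-lemma} combined with the reduction $\nu_e < r p^{ed}$ from Corollary \ref{nu-invt-dynamics-cor}.
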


We begin with a two preliminary results. The following lemma already appears implicitly in the proof of \cite[Prop. 3.2]{BMSm2008}.
\begin{lemma} \label{testideal-degree-lemma}
	Let $\fa$ be an ideal in the polynomial ring $R := \F_p[x_1, \ds, x_n]$ and let $e > 0$ be an integer. If $\fa$ can be generated by polynomials of degree at most $D$ then then $\cC^e_R \cdot \fa^m$ can be generated by polynomials of degree at most $\lfloor Dm/p^e \rfloor$. 
\end{lemma}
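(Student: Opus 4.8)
The plan is to use Proposition \ref{prop-Cef-repn} to control the degrees of the generators of $\cC^e_R \cdot g$ for a single polynomial $g$, and then handle $\fa^m$ by taking products of generators. First I would recall that by Proposition \ref{prop-Cef-repn}, if $g \in R$ is written in the $R^{p^e}$-basis $\{x^\gamma : \gamma \in L\}$ with $L = \{0, 1, \ds, p^e-1\}^n$ as $g = \sum_{\gamma \in L} g_\gamma^{p^e} x^\gamma$, then $\cC^e_R \cdot g = (g_\gamma : \gamma \in L)$. The key numerical observation is that if $\deg g \leq N$ then each monomial appearing in $g$ has degree at most $N$; writing such a monomial as $x^{p^e \delta + \gamma}$ with $\gamma \in L$, it contributes the monomial $x^\delta$ to $g_\gamma$, and $\deg x^\delta = |\delta| \leq \lfloor (N - |\gamma|)/p^e \rfloor \leq \lfloor N/p^e\rfloor$. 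Hence each $g_\gamma$ has degree at most $\lfloor N/p^e \rfloor$, so $\cC^e_R \cdot g$ is generated in degree at most $\lfloor N/p^e \rfloor$.

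Next I would reduce the statement for $\fa^m$ to the single-polynomial case. Since $\fa$ is generated by polynomials $f_1, \ds, f_r$ of degree at most $D$, the ideal $\fa^m$ is generated by the products $f_1^{a_1} \cdots f_r^{a_r}$ with $a_1 + \cds + a_r = m$, each of which has degree at most $Dm$. The operators in $\cC^e_R$ are $R$-linear maps out of $F^e_* R$, so $\cC^e_R \cdot \fa^m$ is the sum of the ideals $\cC^e_R \cdot g$ as $g$ ranges over these generators (using that $\cC^e_R \cdot (h_1 + h_2) \sq \cC^e_R\cdot(h_1, h_2)$ together with $\cC^e_R \cdot (sh) \ni \phi(F^e_*(sh))$ — more carefully, one checks $\cC^e_R \cdot I = \sum_i \cC^e_R \cdot g_i$ when $I = (g_1, \ds, g_k)$, which follows because $F^e_* I = \sum_i F^e_* R \cdot F^e_* g_i$ as an $R$-submodule of $F^e_* R$ and $\cC^e_R$ is applied $R$-linearly). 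Applying the single-polynomial bound to each $g = f_1^{a_1}\cdots f_r^{a_r}$ with $N = Dm$ gives that $\cC^e_R \cdot \fa^m = \sum \cC^e_R \cdot g$ is generated in degree at most $\lfloor Dm/p^e \rfloor$, as desired.

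The only genuinely delicate point is the claim that $\cC^e_R \cdot I = \sum_{i} \cC^e_R \cdot g_i$ for $I = (g_1, \ds, g_k)$; the inclusion $\supseteq$ is immediate, and for $\subseteq$ one writes an arbitrary element of $I$ as $\sum_i s_i g_i$ and uses that $\phi(F^e_*(\sum_i s_i g_i)) = \sum_i \phi(F^e_*(s_i g_i))$, with $\phi(F^e_*(s_i g_i)) \in \cC^e_R \cdot g_i$ since precomposing $\phi$ with multiplication by $F^e_* s_i$ on $F^e_* R$ is again an element of $\cC^e_R = \Hom_R(F^e_* R, R)$. Granting this, the whole argument is a bookkeeping of degrees via the $R^{p^e}$-basis expansion, so I expect no serious obstacle; the main thing to be careful about is the floor function — one wants $\lfloor Dm/p^e\rfloor$ rather than a weaker bound, which is exactly what the inequality $|\delta| \leq \lfloor(Dm - |\gamma|)/p^e\rfloor \leq \lfloor Dm/p^e\rfloor$ provides.
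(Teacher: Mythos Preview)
Your proposal is correct and follows essentially the same route as the paper: reduce to a single polynomial via $\cC^e_R \cdot \fa^m = \sum_g \cC^e_R \cdot g$ over a generating set of $\fa^m$ in degree $\leq Dm$, then use Proposition~\ref{prop-Cef-repn} together with the $R^{p^e}$-basis expansion to bound the degrees of the $g_\gamma$. Your write-up is in fact more explicit than the paper's on the justification of $\cC^e_R \cdot I = \sum_i \cC^e_R \cdot g_i$ and on the monomial-by-monomial floor bound, but the argument is the same.
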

\begin{proof}
	First observe that $\fa^m$ is generated in degrees $\leq Dm$. That is, if we let $G := \fa^m \cap R_{\leq Dm}$ then $\fa^m = (f : f \in G)$. It follows that $\cC^e_R \cdot \fa^m = \sum_{f \in G} \cC^e_R \cdot f$ and therefore it suffices to show that if $f$ has degree $\leq Dm$ then $\cC^e_R \cdot f$ is generated by elements of degree $\leq Dm / p^e$. 
	
	Thus suppose $f$ has degree $\leq Dm$, and let $L$ be the set of multi-exponents $L := \{0, 1, \ds, p^e -1\}^n$. Suppose that, in the $R^{p^e}$-basis $\{x^\gamma : \gamma \in L\}$ for $R$, $f$ is expressed as $f = \sum_{\gamma \in L} g_\gamma^{p^e} x^\gamma$. Since $f$ has degree $\leq Dm$, all $g_\gamma$ have degrees $\leq Dm/p^e$. By Proposition \ref{prop-Cef-repn}, $\cC^e_R \cdot f = (g_\gamma : \gamma \in L)$ and the result follows.  
\end{proof}
\begin{lemma} \label{mu-J-lemma}
	Let $A$ be a commutative ring and consider the polynomial ring $R := A[x_1, \ds, x_n]$. Consider the monomial $\mu = x_1^{b_1} \cds x_n^{b_n}$ where $b_i \geq 0$ and the ideal $J = (x_1^{b_1 + 1}, \ds, x_n^{b_n + 1})$. Then for all monomial ideals $I \sq R$, $\mu \in I$ if and only if $I \not\sq J$. 
\end{lemma}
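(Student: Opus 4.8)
The plan is to prove both implications directly by working with monomial generators, using the standard fact that for a monomial ideal $I$, a monomial $\nu$ lies in $I$ if and only if $\nu$ is divisible by one of the monomial generators of $I$. First I would establish the easy direction: if $\mu \in I$, then since $\mu = x_1^{b_1}\cdots x_n^{b_n}$ is not divisible by any of the generators $x_i^{b_i+1}$ of $J$ (the exponent of $x_i$ in $\mu$ is $b_i < b_i + 1$), we get $\mu \notin J$, hence $I \not\sq J$.

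For the converse, I would argue by contrapositive: suppose $\mu \notin I$, and show $I \sq J$. Since $I$ is a monomial ideal, it suffices to show every monomial $\nu \in I$ lies in $J$. Fix such a $\nu = x_1^{c_1}\cdots x_n^{c_n} \in I$. The key observation is that $\nu$ cannot divide $\mu$: if it did, then $\mu$ would be a multiple of $\nu \in I$, forcing $\mu \in I$, a contradiction. Since $\nu \nmid \mu$, there must be some index $i$ with $c_i > b_i$, i.e. $c_i \geq b_i + 1$. Then $x_i^{b_i+1} \mid \nu$, so $\nu \in J$. This shows $I \sq J$, completing the proof.

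There is no serious obstacle here; the lemma is elementary once one invokes the divisibility characterization of membership in monomial ideals. The only mild subtlety is making sure the argument is valid over an arbitrary commutative ring $A$ (not a field): the characterization "a monomial lies in a monomial ideal iff it is divisible by a monomial generator" holds over any commutative ring, since a monomial ideal is by definition generated by monomials and the relevant reasoning is purely combinatorial on exponent vectors — one does not need $A$ to be a domain or reduced. I would state this divisibility fact explicitly (or cite it) at the start of the proof so that both directions follow cleanly.
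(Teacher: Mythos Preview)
Your proposal is correct and follows essentially the same approach as the paper: both directions rest on the observation that a monomial $x_1^{c_1}\cdots x_n^{c_n}$ lies outside $J$ precisely when $c_i \leq b_i$ for all $i$, i.e.\ when it divides $\mu$. The only cosmetic difference is that you prove the $(\Leftarrow)$ direction by contrapositive, whereas the paper argues it directly (picking a monomial of $I$ not in $J$ and multiplying up to $\mu$).
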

\begin{proof}
	The $(\Rightarrow)$ direction is clear, since $\mu \notin J$. For $(\Leftarrow)$, suppose $I \not \sq J$. This means that there exists some monomial $x_1^{a_1} \cds x_n^{a_n}$ in $I$ with $a_i \leq b_i$ for all $i$. By multiplying it with the appropriate monomial, we conclude $\mu \in I$. 
\end{proof}

We are now ready to prove a characteristic-$p$ analogue of Theorem \ref{thm-BMS-mon-ideal}, which will be key in the proof.

\begin{proposition} \label{prop-root-to-J-char-p}
	Let $\fa_p \sq \F_p[x_1, \ds, x_n]$ be a monomial ideal and suppose that $\alpha$ is a Bernstein-Sato root of $\fa_p$ and let $d > 0$ be an integer such that $\alpha(p^d - 1) \in \Z$. Then there is a monomial ideal $J$ whose radical contains $\fa$, a rational number $\beta$ and a sequence $e_i \nearrow \infty$ of positive integers such that
	$$\nu^J_{\fa_p}(p^{e_i d}) = \beta p^{e_i d} + \alpha.$$
\end{proposition}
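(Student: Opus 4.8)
The plan is to characterize Bernstein-Sato roots of $\fa_p$ via the $\nu$-invariants (Proposition \ref{prop-new-charact}), and then exploit the rigidity of $\nu$-invariants of monomial ideals together with Lemma \ref{mu-J-lemma} to pin down a single monomial ideal $J$ of the special form $(x_1^{b_1+1}, \ds, x_n^{b_n+1})$ that realizes $\alpha$ exactly along a subsequence of powers of $p$.

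\emph{Step 1: from the root to a sequence of $\nu$-invariants.} Since $\alpha$ is a Bernstein-Sato root of $\fa_p$, by Proposition \ref{prop-new-charact}(b) there is a sequence $(\nu_e)$ with $\nu_e \in \nu^\bullet_{\fa_p}(p^e)$ and $\nu_e \to \alpha$ in the $p$-adic topology. Restricting to the subsequence of levels that are multiples of $d$, and using that $\alpha(p^d-1) \in \Z$ so that $\alpha$ has eventually periodic $p$-adic digits of period $d$, I can arrange that $\nu_{e_id} \equiv \alpha \pmod{p^{e_i d}}$ for a suitable sequence $e_i \nearrow \infty$; since $\alpha$ is negative and rational, writing $\alpha = -c$ with $c = k/(p^d-1)$ or similar, the congruence forces $\nu_{e_id} = \alpha + \beta p^{e_i d}$ for some $\beta$ depending only on $\alpha$ and $d$ (and ultimately $\beta > 0$ because the $\nu_e$ grow). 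Here I should use Corollary \ref{nu-invt-dynamics-cor} to control the lower end of the $\nu$-invariant chain and make sure the value is exactly of the claimed affine form rather than just congruent to it; the point is that $\nu^\bullet$ is a finite set that stabilizes in an arithmetic-progression pattern, which is the characteristic-$p$ shadow of Theorem \ref{thm-BMS-mon-ideal-J}.

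\emph{Step 2: produce the witnessing monomial ideal $J$.} By Proposition \ref{nu-invt-trun-ti-prop}, $\nu_{e_id} \in \nu^\bullet_{\fa_p}(p^{e_i d})$ means $\cC^{e_id}_R \cdot \fa_p^{\nu_{e_id}} \neq \cC^{e_id}_R \cdot \fa_p^{\nu_{e_id}+1}$. I want to convert this into a statement about containment in a Frobenius power of a monomial ideal. Using the combinatorial description of $\nu^J_\fa$ for monomial ideals together with Lemma \ref{mu-J-lemma}: for each $e_i$ there is a monomial $\mu_i$ (coming from a generator of $\cC^{e_id}_R\cdot\fa_p^{\nu_{e_id}}$ not in $\cC^{e_id}_R\cdot\fa_p^{\nu_{e_id}+1}$, whose degree is controlled by Lemma \ref{testideal-degree-lemma}) with $\mu_i \in \cC^{e_id}_R \cdot \fa_p^{\nu_{e_id}}$ but $\mu_i \notin \cC^{e_id}_R \cdot \fa_p^{\nu_{e_id}+1}$, equivalently $\fa_p^{\nu_{e_id}} \not\sq J_i^{[p^{e_id}]}$ and $\fa_p^{\nu_{e_id}+1} \sq J_i^{[p^{e_id}]}$ where $J_i := (x_1^{b_1+1}, \ds, x_n^{b_n+1})$ for $\mu_i = x^b$. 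This exactly says $\nu^{J_i}_{\fa_p}(p^{e_id}) = \nu_{e_id} = \beta p^{e_id} + \alpha$. Finally, Lemma \ref{testideal-degree-lemma} bounds the exponents $b_\ell$ of $\mu_i$ by roughly $D\nu_{e_id}/p^{e_id} \to D\beta$, so there are only finitely many possibilities for $J_i$; by pigeonhole one $J$ occurs for infinitely many $i$, and discarding the rest gives the desired single $J$ together with the subsequence. Its radical contains $\fa$ since $\fa_p^N \sq J^{[p^{e_id}]} \sq \sqrt{J}$ for $N$ large.

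\emph{Main obstacle.} The delicate point is Step 1: extracting from mere $p$-adic convergence $\nu_{e_id} \to \alpha$ the \emph{exact} affine formula $\nu_{e_id} = \beta p^{e_id} + \alpha$ with one fixed $\beta$, rather than just $\nu_{e_id} = \beta_i p^{e_id} + \alpha$ with varying $\beta_i$. This requires combining the periodicity of the $p$-adic digits of $\alpha$ (from $\alpha(p^d-1)\in\Z$) with the growth and near-linearity of $\nu$-invariants; I expect to need Corollary \ref{nu-invt-dynamics-cor} to show that once $\nu_{e_id}$ agrees with $\alpha$ modulo a high power of $p$, its integer part is forced, and then a diophantine argument (the set of valid $\beta_i$ is constrained by $0 \le \nu_{e_id}$ and by the degree bound of Lemma \ref{testideal-degree-lemma}, hence finite, hence — after passing to a further subsequence — constant). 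The rest of the argument is essentially bookkeeping with the monomial combinatorics already packaged in Lemmas \ref{testideal-degree-lemma} and \ref{mu-J-lemma}.
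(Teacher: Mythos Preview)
Your proposal is correct and follows essentially the same route as the paper: use Proposition \ref{prop-new-charact} to get $\nu_e \equiv \alpha \pmod{p^{ed}}$, use Corollary \ref{nu-invt-dynamics-cor} to force $0 \le \nu_e < r p^{ed}$ so that only finitely many values of $\beta_i$ occur, pigeonhole to fix $\beta$, then use Lemma \ref{testideal-degree-lemma} to bound the distinguishing monomial and pigeonhole again to fix $J$ via Lemma \ref{mu-J-lemma}. One small clarification: the finiteness of the $\beta_i$ comes entirely from Corollary \ref{nu-invt-dynamics-cor} (it gives $\nu_e < r p^{ed}$, hence $\beta_i \in \{-\gamma, 1-\gamma, \dots, r-1-\gamma\}$), not from Lemma \ref{testideal-degree-lemma}, which is only needed for the second pigeonhole on the monomial $\mu$.
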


We remark that, by \cite[Thm. 6.9]{QG19}, $\alpha$ is in $ \Z_{(p)}$ and thus we can always find some $d > 0$ such that $\alpha(p^d - 1) \in \Z$.

\begin{proof}
		By enlarging $d$ if necessary we may find $m \in \{0, 1, \ds, p^d-1\}$ and some rational number $\gamma$ with $-1 \leq \gamma \leq 0$ such that $\alpha = m + p^d \gamma$ (consider the $p$-adic expansion of $\alpha$, which is eventually repeating, c.f. \cite[\S 7]{QG19}). If $\alpha = \alpha_0 + p \alpha_1 + p^2 \alpha_2 + \cds$ is the $p$-adic expansion for $\alpha$ then, for all $e  >0$, 
		$$\alpha_0 + \cds + p^{ed -1}\alpha_{ed - 1} = \alpha - p^{ed}\gamma.$$
		
		By Proposition \ref{prop-new-charact}, $\alpha$ is the $p$-adic limit of a sequence $(\nu_e) \sq \N$ where $\nu_e \in \nu^\bullet_\fa(p^e)$. By passing to a subsequence we assume that $\nu_e \in \nu^\bullet_\fa(p^{ed})$ and that $\nu_e \equiv \alpha \mod p^{ed}$. By Corollary \ref{nu-invt-dynamics-cor} we can also assume that $0 \leq \nu_e < rp^{ed}$. From our assumptions it follows that for every $e>0$ there is some $s \in \{0, 1, \ds, r-1\}$ such that
		\begin{align*}
		\nu_e & = \alpha_0 + p \alpha_1 + \cds + p^{ed -1} \alpha_{ed-1} + p^{ed} s \\
			& = \alpha + (s - \gamma)p^{ed}.
		\end{align*}
		From Proposition \ref{nu-invt-trun-ti-prop} we conclude that for all $e>0$ there exists some $s \in \{0, 1 , \ds, r-1\}$ such that
		$$\cC^{ed}_R \cdot \fa^{\alpha + (s - \gamma)p^{ed}} \neq \cC^{ed}_R \cdot \fa^{\alpha + (s - \gamma)p^{ed} + 1}.$$
		Since $\{0, 1, \ds, r-1\}$ is a finite set, there exists some fixed $s_0 \in \{0, 1, \ds, r-1\}$ and a sequence $e_i \nearrow \infty$ such that
		$$\cC^{e_id}_R \cdot \fa^{\alpha + (s_0 - \gamma)p^{e_id}} \neq \cC^{e_id}_R \cdot \fa^{\alpha + (s_0 - \gamma)p^{e_id} + 1}.$$
		for all $i> 0$. 
		
		By Proposition \ref{prop-Cef-repn} the two ideals above are monomial ideals and, by Lemma \ref{testideal-degree-lemma}, there is some constant $K >0$ independent of $e$ such that both ideals are generated in degrees $\leq K$. As there are finitely many monomials of degree $\leq K$, by passing to a subsequence we may assume that there exists some monomial $\mu = x_1^{b_1} \cds x_n^{b_n}$ such that, for all $i > 0$, $\mu \in \cC^{e_i d}_R \cdot \fa^{\alpha + (s_0 - \gamma) p^{e_i d}}$ and $\mu \notin \cC^{e_i d}_R \cdot \fa^{\alpha + (s_0- \gamma)p^{e_i d} + 1}$. Finally, we let $J = (x_1^{b_1 + 1}, \ds , x_n^{b_n + 1})$ and, from Lemma \ref{mu-J-lemma}, we conclude that $\nu^J_\fa(p^{e_i d}) = \alpha + (s_0 - \gamma) p^{e_i d}$ as required.
\end{proof}

Before going into the proof of Theorem \ref{thm-BSroot-mon-ideal} we give an example to illustrate how one obtains a Bernstein-Sato root of $\fa_p$ from a root of $b_{\fa_\C}(s)$. 

\begin{example}
	Let $\fa = (X^2 Y Z, X Y^2 Z, X Y Z^2)$ (c.f. \cite[Ex. 5.2]{BMSa06b}). Then $b_{\fa_\C}(s) = (s + 3/4)(s + 5/4)(s + 6/4)(s + 1)^3$. Let us consider the root $\lambda = -5/4$. Theorem \ref{thm-BMS-mon-ideal} implies that there is a monomial ideal $J$, a positive integer $M$ and a rational number $\beta$ such that $\nu^J_\fa(q) = \beta q - 5/4$ whenever $q \equiv 1 \mod M$. 
	
	In this case, we claim that $J = (X^3, Y^3, Z^3)$ with $M = 4$ works. Indeed, the ideal $\fa^s$ is generated by monomials
	$$(X^2 Y Z)^u (X Y^2 Z)^v (X Y Z^2)^w = X^{2u + v + w} Y^{u + 2v + w} Z^{u + v + 2w}$$
	where $u,v,w$ range through all nonnegative integers with $u + v + w = s$, whereas $J^{[q]} = (X^{3q}, Y^{3q}, Z^{3q})$. We conclude that 
	$$\nu^J_\fa(q) := \max \{u + v + w \ | \ 2 u + v + w \leq 3q -1 \text{ and }  u + 2v + w \leq 3q -1 \text{ and } u + v + 2w \leq 3q -1\}.$$
	We claim that if $q \equiv 1 \mod 4$ then
	$$\nu^J_\fa(q) = \frac{9 q - 5}{4}.$$
	Indeed, adding the inequalities gives $\nu^J_\fa(q) \leq \lfloor (9q - 3)/4 \rfloor = (9 q - 5)/4$, and equality is proven by taking $u = v = (3 q - 3)/4$, $w = (3 q + 1)/4$.
	
	Now suppose that $p \equiv 3 \mod 4$. Then for all $e$ we have $p^{2e} \equiv 1 \mod 4$ and therefore $(9 p^{2e} - 5)/4 \in \nu^\bullet_\fa(p^{2e})$. Since the $p$-adic limit of the sequence $((9p^{2e} - 5)/4)_{ e = 0}^\infty$ is $-5/4$, Proposition \ref{prop-new-charact} implies that $-5/4$ is a Bernstein-Sato root of $\fa_p$, as required. The case $p \equiv 1 \mod 4$ follows similarly.
\end{example}

We are now ready to begin the proof of Theorem \ref{thm-BSroot-mon-ideal}.

\begin{proof}[Proof of Theorem \ref{thm-BSroot-mon-ideal}]
	First, let $\alpha$ be a root of the $b_{\fa_\C}(s)$. By Theorem \ref{thm-BMS-mon-ideal} we may find a monomial ideal $J \sq \Z[x_1, \ds, x_n]$, a rational number $\beta \in \Q$ and an integer $M$ such that $\nu^J_\fa(q) = \beta q + \alpha$ whenever $q$ is large enough and $q \equiv 1 \mod M$. Observe that, by replacing $M$ with a big multiple, $M$ can be chosen independently of $\alpha$, and we may also assume that $M \beta \in \N$. Let $p$ be a prime number that does not divide $M$ and such that $\alpha \in \Z_{(p)}$. Then there exists some $d$ such that $p^d \equiv 1 \mod M$ and therefore $\nu^J_\fa(p^{ed}) = \beta p^{ed} + \alpha$ for all $e > 0$. Since the $p$-adic limit of the sequence $(p^{ed} + \alpha)_{e = 0}^\infty$ is $\alpha$, Proposition \ref{prop-new-charact} implies that $\alpha$ is a Bernstein-Sato root of $\fa_p$.
	
	We now prove the other containment. We let $M$ be a number satisfying the conclusion of Theorem \ref{thm-BMS-mon-ideal-J} for the ideal $\fa$, and pick $p$ large enough so that it does not divide $M$. Suppose then that $\alpha$ is a Bernstein-Sato root of $\fa_p$, and we will show that $\alpha$ is a root of $b_{\fa_\C}$.
	
	By \cite[Thm. 6.9]{QG19}, $\alpha$ is in $\Z_{(p)}$ and thus we may find some $d > 0$ such that $\alpha(p^d - 1) \in \N$. By replacing $d$ with a multiple, we may also assume that $p^d \equiv 1 \mod M$. By Proposition \ref{prop-root-to-J-char-p} we can find some monomial ideal $J$ containing $\fa$ in its radical, a rational number $\beta$ and a sequence $e_i \nearrow \infty$ such that $\nu^J_\fa(p^{e_i d}) = \beta p^{e_id} + \alpha$. On the other hand, Theorem \ref{thm-BMS-mon-ideal} says that there are some rational numbers $\beta'$ and $\eta$ such that $\nu^J_\fa(q) = \beta' q + \eta$ for all $q \equiv 1 \mod M$ large enough. We conclude that $\beta' = \beta$ and $\eta = \alpha$ and, by Corollary \ref{cor-find-roots}, $\alpha$ is a root of $b_{\fa_\C}(s)$. 
\end{proof}

\section{Examples in small characteristics}

To finish we would like to illustrate the behavior in small characteristics by computing some examples. Let us remark that both of the examples below exhibit the following behavior: the Bernstein-Sato roots of $\fa_p$ are always roots of $b_{\fa_\C}(s)$ and, moreover, they are precisely the roots that lie in $\Z_{(p)}$. We do not know any example where this is not the case. 

We begin by making some general observations from \cite{BMSa06b}. Let $R = k[x_1, \ds, x_n]$ be a polynomial ring over an $F$-finite field $k$ of characteristic $p>0$, let $f_j = \prod_i x_i^{a_{ij}}$ for $j = 1, \ds, r$ be monomials in $R$ and let $\fa = (f_1, \ds, f_r)$ be the monomial ideal they generate. Let $\ell_i(t)$ be the linear form $\ell_i(t) = \sum_j a_{ij} t_j$ on $\Z^r$, where $i = 1, 2, \ds, n$. With this notation, the ideal $\fa^s$ is generated by monomials
$$x_1^{\ell_1(\beta)} \cds x_n^{\ell_n(\beta)}$$
where $\beta = (\beta_1, \ds, \beta_r) \in \N_0^r$ ranges through all tuples satisfying $\sum_j \beta_j = s$.

Next, observe that all $\nu$-invariants $\nu \in \nu^\bullet_\fa(p^e)$ arise as $\nu = \nu^J_\fa(p^e)$ where $J$ is a monomial ideal of the form $J = (x_1^{a_1}, \ds, x_n^{a_n})$ (see the proof of Proposition \ref{prop-root-to-J-char-p}). 

For such an ideal $J = (x_1^{a_1}, \ds, x_n^{a_n})$ we further observe the following:
\begin{align*}
\nu^J_\fa(p^e) & = \max\{s > 0 : \fa^s \not \sq J^{[p^e]}\} \\
	& = \max_{\beta \in \N_0^r} \{\textstyle \sum_j \beta_j : \ell_i(\beta) \leq a_i p^e - 1 \text{ for all } i\},
\end{align*}

\paragraph{Example 1:} Consider the ideal $\fa = (x_1^2, x_2^3)$. In this case, using computational software \cite{M2Dmod}, we find:
$$b_{\fa_\C} (s) = (s + \frac{5}{6}) (s + \frac{7}{6}) (s + \frac{4}{3}) (s + \frac{3}{2}) ( s+ \frac{5}{3}) (s + 2),$$
For $J = (x_1^{a_1}, x_2^{a_2})$ we have $\ell_1(t_1, t_2) = 2 t_1$, $\ell_2(t_1, t_2) = 3 t_2$ and therefore
\begin{align*}
\nu^J_\fa(p^e) & = \max_{t_1, t_2 \in \N_0} \{t_1 + t_2 : 2t_1 \leq a_1 p^e - 1, 3t_2 \leq a_2 p^e - 1\} \\
	& = \lfloor \frac{p^e a_1 - 1}{2} \rfloor + \lfloor \frac{p^e a_2  - 1}{3} \rfloor
\end{align*}
and therefore
$$\nu^\bullet_\fa(p^e) = \bigg\{\lfloor \frac{p^e a_1 - 1}{2} \rfloor + \lfloor \frac{p^e a_2  - 1}{3} \rfloor : a_1, a_2 \in \N \bigg\}$$

 Suppose that $p = 2$ and that $e$ is even. Then for all $a_1 \in \N$ we have $\lfloor a_1p^{e} - 1)/2 \rfloor = a_1 p^{e-1} - 1$, while
		$$\lfloor \frac{a_2 p^e -1}{3} \rfloor = \begin{cases}
		c p^e - 1 \text{ if } a_2 = 3c \\
		(c - \frac{1}{3})p^e - \frac{1}{3} \text{ if } a_2 = 3c - 1 \\
		(c - \frac{2}{3})p^e - \frac{2}{3} \text{ if } a_2 = 3 c - 2,
		\end{cases}$$
		where we always take $c \in \N$. We conclude that, for even $e$,
		\begin{align*}
		\nu^\bullet_\fa(p^e) & = \bigg\{a_1 p^{e-1}- 2 : a_1, \in \N \bigg\} \cup \bigg\{a_1 p^{e-1}+ (c - \frac{1}{3}) p^e - \frac{4}{3}: a_1, c \in \N \bigg\} \\
		& \hspace*{50pt} \cup \bigg\{a_1 p^{e-1}+ (c - \frac{2}{3}) p^e - \frac{5}{3}: a_1, c \in \N \bigg\} 
		\end{align*}
		and therefore $BS(\fa) = \{-4/3, -5/3, -2\}$ by Proposition \ref{prop-new-charact}.
		
 When $p = 3$ a similar computation yields
		$$\nu^\bullet_\fa(p^e) = \bigg\{ a_2p^{e-1} - 2 : a_2 \in \N \bigg\}  \cup \bigg\{ (c - \frac{1}{2})p^e + a_2 p^{e-1} - \frac{3}{2} : c, a_2 \in \N \bigg\}$$
		and therefore $BS(\fa) = \{-3/2, -2\}$.
		
		When $p \geq 5$ the same method yields $BS(\fa) = \{-5/6, -7/6, -4/3, -3/2, -5/3, -2\}$ as predicted by Theorem \ref{thm-BSroot-mon-ideal}

\paragraph{Example 2:} 

Let $\fa = (x_2 x_3, x_1 x_3, x_1 x_2)$. By again using \cite{M2Dmod} we find that
$$b_{\fa_\C} = (s + 2)^2 (s + \frac{3}{2}),$$
and for all $p > 2$ we obtain precisely the above Bernstein-Sato roots.

In this case we have $\ell_1(t_1, t_2, t_3) = t_2 + t_3$, $\ell_2(t_1, t_2, t_3) = t_1 + t_3$ and $\ell_3(t_1, t_2, t_3) = t_1 + t_2$. For $J = (x_1^{a_1}, x_2^{a_2}, x_3^{a_3})$ we claim
$$\nu^J_\fa(p^e) = \min \big\{p^e (a_1 + a_2) - 2, p^e(a_1 + a_3) - 2, p^e (a_2 + a_3) - 2, \lfloor \frac{ p^e(\sum_j a_j) - 3}{2} \rfloor \big\}.$$
Indeed, when the minimum is given by $p^e(a_1 + a_2) - 2$ then we have $a_1 + a_2 \leq a_3$ and we can take $t_1 = a_2 p^e - 1$, $t_2 = a_1 p^e - 1$. The case where the minimum is $p^e(a_1 + a_3) - 2$ and the case where the minimum is $p^e(a_2 + a_3) - 2$ follow similarly. We therefore may assume that the minimum is $\lfloor (p^e(a_1 + a_2 + a_3) - 3)/2 \rfloor$ and that $a_1 + a_2 > a_3$, $a_1 + a_3 > a_2$ and $a_2 + a_3 > a_1$. The case where $p^e(\sum_j a_j) - 3$ is divisible by 2 is dealt with by taking $t_i = \frac{1}{2}(p^e(\sum_j a_j - 2 a_i) - 1)$. In the case where $p^e(\sum_j a_j) - 3$ is not divisible by 2 we can take $t_1 = \frac{1}{2} (p^e(-a + b + c) + 2)$, $t_2 = \frac{1}{2} (p^e(a_1 - a_2 + a_3) - 2)$ and $t_3 = \frac{1}{2} (p^e(a_1 + a_2 - a_3) - 2)$. 
%
%

It follows that for $p = 2$ we have 
$$\nu^\bullet_\fa(p^e) = \{p^e a - 2: a \in \N_0\}$$
and therefore $BS(\fa) = \{-2\}$. 

For $p > 2$ we find that 
$$\nu^\bullet_\fa(p^e) = \{p^e a - 2: a \in \N_0\} \cup \{(p^e a - 3)/2 : a \in 2 \N  + 1\}$$
and therefore $BS(\fa) = \{-3/2, -2\}$, again in agreement with Theorem \ref{thm-BSroot-mon-ideal}.

\bibliography{biblio}
\bibliographystyle{alpha}

\end{document}